\documentclass[lettersize,journal]{IEEEtran}
\usepackage{amsmath,amsfonts,amssymb}
\usepackage{algorithmic}
\usepackage{algorithm}

\usepackage{array}
\usepackage{textcomp}
\usepackage{stfloats}
\usepackage{url}
\usepackage{verbatim}
\usepackage{graphicx}
\usepackage{cite}
\usepackage{hyperref}
\ifCLASSOPTIONcompsoc
  \usepackage[caption=false,font=normalsize,labelfont=sf,textfont=sf]{subfig}
\else
  \usepackage[caption=false,font=footnotesize]{subfig}
\fi

\newtheorem{theorem}{Theorem}
\newtheorem{lemma}{Lemma}
\newtheorem{assumption}{Assumption}

\newtheorem{remark}{Remark}

\begin{document}

\title{Distributed Zeroth-Order Optimization with Rademacher Perturbations and Momentum Gradient Tracking}

\author{Yanxu~Su,~\IEEEmembership{Member,~IEEE,} Xiaorui Tong, 
	and~Changyin~Sun,~\IEEEmembership{Senior Member,~IEEE}
	\thanks{The authors are with the School of Artificial Intelligence, Anhui University, Hefei 230601, China, and also with the Engineering Research Center of Autonomous Unmanned System Technology, Ministry of Education, Hefei 230601, China (e-mail: yanxu.su@ahu.edu.cn, W125211087@stu.ahu.edu.cn, cysun@ahu.edu.cn).}}

\markboth{Journal of \LaTeX\ Class Files,~Vol.~14, No.~8, August~2021}%
{Shell \MakeLowercase{\textit{et al.}}: A Sample Article Using IEEEtran.cls for IEEE Journals}


\maketitle

\begin{abstract}

Zeroth-order (ZO) optimization is indispensable for complex non-convex tasks where explicit gradients are computationally prohibitive or strictly inaccessible. For deploying ZO methods over distributed heterogeneous networks, the gradient tracking technique is often employed to eliminate structural data biases. However, the inherent variance of derivative-free estimators is also amplified. To overcome this problem, we propose Zeroth-Order Momentum Gradient Tracking (ZO-MGT), which integrates momentum-based variance reduction with dynamic gradient tracking. Specifically, ZO-MGT that requires exactly two function queries per iteration can avoid costly batch sampling and prevent variance explosion, while eliminating structural biases. Moreover, by utilizing Rademacher perturbations, it preserves optimal query efficiency and enables bitwise hardware acceleration. We theoretically analyze the convergence of ZO-MGT and establish an $\mathcal{O}(1/T)$ convergence rate. Furthermore, we prove that a large momentum factor can aggressively suppress the heterogeneity-induced bias floor at a remarkable quadratic rate of $\mathcal{O}((1-\beta)^2)$. Numerical experiments under extreme data heterogeneity verify that ZO-MGT can effectively overcome traditional tracking failures with accelerated convergence guarantees, while achieving significantly tighter consensus.

\end{abstract}

\begin{IEEEkeywords}
Distributed optimization, zeroth-order optimization, gradient tracking, momentum acceleration, Rademacher sampling, data heterogeneity.
\end{IEEEkeywords}

\section{Introduction}

\IEEEPARstart{D}{istributed} optimization has emerged as a fundamental paradigm for addressing large-scale machine learning, sensor networks, and multi-agent control problems. In a standard distributed setting, a network of agents collaborates to minimize a global objective function using only local computations and peer-to-peer communications \cite{nedicDistributedSubgradientMethods2009}. Mathematically, this cooperative task is typically formulated as the following finite-sum minimization problem:
\begin{equation}\label{DOP}
	\min_{x \in \mathbb{R}^d} F(x) = \frac{1}{N} \sum_{i=1}^N f_i(x), 
\end{equation}
where $N$ is the total number of agents and $f_i(x)$ denotes the local cost function accessible only to agent $i$. Over the past decade, numerous gradient-based distributed algorithms have been proposed for the problem in \eqref{DOP}. Classic first-order methods, e.g., Distributed Gradient Descent (DGD) \cite{nedicDistributedSubgradientMethods2009}, establish early foundations but suffer from non-vanishing steady-state bias under constant step sizes in highly heterogeneous networks. To eliminate the steady-state bias and guarantee exact consensus, various frameworks have been proposed. Early approaches, including the Alternating Direction Method of Multipliers (ADMM) \cite{shi2014linear} and distributed augmented Lagrangian methods \cite{jakovetic2015linear}, appeal to dual variable updates and then develop a double-loop update for primal and dual variables. To simplify the updating to single-loop, subsequent research has focused on exact first-order methods, prominently including EXTRA \cite{shiEXTRAExactFirstOrder2015}, Exact Diffusion \cite{yuan2018exact}, and Gradient Tracking (GT) \cite{quHarnessingSmoothnessAccelerate2018, nedicAchievingGeometricConvergence2017}. While all these methods can guarantee exact linear convergence for smooth and strongly convex objectives, GT is particularly notable for its decoupled tracking structure. By employing dynamic average consensus to track the global gradient, GT provides a flexible architecture that is readily adaptable to non-convex extensions and complex time-varying topologies \cite{puDistributedStochasticGradient2021}. However, despite their theoretical appeal, the applicability of these exact first-order methods is fundamentally constrained by the restrictive assumption, i.e., analytical gradients are readily available and computationally inexpensive to evaluate.

In many emerging applications, the analytical gradients of the objective functions are either computationally prohibitive to evaluate or entirely unavailable \cite{liuPrimerZerothorderOptimization2020, nesterovRandomGradientfreeMinimization2017}.
A prominent example is the parameter-efficient fine-tuning of Large Language Models (LLMs), where the backward pass of first-order optimizers imposes excessive GPU memory overhead \cite{malladiFinetuningLanguageModels2023, zhaoSecondorderFinetuningPain2025, yeWhyDoesAdaptive2026}. Furthermore, explicit gradients are fundamentally inaccessible in black-box adversarial attacks on deep neural networks \cite{chenZOOZerothOrder2017} and simulation-based reinforcement learning \cite{fazelGlobalConvergencePolicy2018}. To circumvent this reliance on explicit derivatives, zeroth-order (ZO) optimization has emerged as a promising alternative. ZO methods approximate the true gradient by querying only the forward function values along random perturbation directions. Unlike traditional ZO methods based on continuous Gaussian smoothing, recent studies such as FZOO \cite{dangFZOOFastZerothorder2025} demonstrate the computational efficiency of discrete Rademacher sampling ($u \in \{-1, 1\}^d$). By replacing floating-point Gaussian vectors with discrete Rademacher perturbations, ZO estimators maintain comparable approximation accuracy while significantly reducing computational overhead. Moreover, this discrete sampling mechanism facilitates efficient bitwise operations, paving the way for hardware-level acceleration in future deployments.

Leveraging its derivative-free nature, ZO optimization has been widely extended to distributed and decentralized networks. For example, the distributed ZO algorithms proposed by Tang \textit{et al.} \cite{tangDistributedZeroorderAlgorithms2021} established the initial convergence guarantees for non-convex multi-agent optimization. Building upon this, Yi \textit{et al.} \cite{yiZerothorderAlgorithmsStochastic2022} demonstrated a linear speedup for stochastic distributed ZO methods under relaxed, state-dependent variance assumptions. Recently, the aforementioned theoretical frameworks have been tailored to specific network constraints, enabling communication-efficient federated learning \cite{fangCommunicationefficientStochasticZerothorder2022} and deployment in resource-constrained edge Internet of Things (IoT) systems \cite{dangAdaptiveCommunicationefficientZerothorder2024}.

Despite the advances, practical deployment of distributed ZO algorithms is bottlenecked by the inherent coupling of data heterogeneity and estimator variance. In highly heterogeneous environments, standard ZO methods utilizing basic random estimators fail to effectively suppress the steady-state bias floor induced by data heterogeneity. Although recent studies have explored using gradient-free feedback \cite{jinZerothorderFeedbackbasedOptimization2023} or communication compression \cite{wangHeterogeneousDistributedZerothorder2026}, achieving highly accurate tracking without restrictive assumptions, such as massive batch sizes or data homogeneity, still remains a critical open problem.

Integrating GT into ZO frameworks can theoretically eliminate heterogeneity-induced bias, which motivates recent extensions for unbalanced graphs \cite{wang2025distributed} and compressed networks \cite{xu2024compressed}. However, applying GT directly to ZO optimization faces critical practical limitations. GT fundamentally relies on accumulating successive gradient differences to track the global direction. Due to the high sampling variance of derivative-free estimators, these consecutive differences are easily overwhelmed by noise. For instance, using single-point estimates to reduce query overhead \cite{mhanna2023single} introduces severe inherent variance. The GT mechanism cumulatively amplifies this kind of noise, critically destabilizing network consensus \cite{linDecentralizedGradientfreeMethods2024}.

To achieve accurate tracking without amplifying variance, various variance reduction techniques have been developed. However, recent studies highlight a fundamental trade-off between sampling costs and convergence speeds in distributed ZO optimization \cite{mu2025variance}. Existing methods typically face a dilemma between computational and communication bottlenecks. To suppress sampling variance, many approaches rely on deterministic full-coordinate estimators \cite{tangDistributedZeroorderAlgorithms2021} or standard stochastic variance reduction methods, such as ZO-SVRG and ZO-SPIDER \cite{liu2018zeroth, ji2019improved}. Because both strategies require $\mathcal{O}(d)$ queries per iteration, they impose a severe computational burden that diminishes the core advantage of derivative-free optimization. Conversely, while incremental variance reduction frameworks \cite{zhangZIVRIncrementalVarianceReduction2026} alleviate the reliance on large batch sizes, their fully decentralized implementation incurs substantial communication overhead.

To overcome these prohibitive costs, momentum serves as a computationally efficient alternative. Cutkosky and Orabona \cite{cutkoskyMomentumbasedVarianceReduction2019a} established that momentum acts as an implicit variance reducer in centralized non-convex optimization, achieving optimal convergence without massive batch sizes. Leveraging this property, momentum has been increasingly integrated into ZO methods to smooth stochastic gradient estimates \cite{chenZOAdaMMZerothorderAdaptive2019}, derive query complexity lower bounds \cite{huangAcceleratedZerothorderFirstorder2022}, and stabilize adaptive fine-tuning \cite{yeWhyDoesAdaptive2026}. Nevertheless, extending momentum-based variance reduction to decentralized heterogeneous networks presents significant challenges. Although recent studies use momentum in distributed ZO \cite{dangMomentumbasedZerothorderGradient2026}, they depend on centralized aggregation rather than decentralized peer-to-peer architectures. In environments with significant data heterogeneity, it remains a critical open problem to effectively integrate momentum-based variance reduction and dynamic gradient tracking to reduce sampling noise and correct heterogeneity-induced bias, respectively.

In this paper, we propose the Zeroth-Order Momentum Gradient Tracking (ZO-MGT) algorithm for distributed non-convex optimization, aiming to address the aforementioned problems. To be specific, by integrating momentum-based variance reduction with dynamic gradient tracking, ZO-MGT ensures stable and efficient convergence even under substantial data heterogeneity. The main contributions of this work are summarized as follows.

\begin{enumerate}
	\item We design a derivative-free distributed optimization framework that effectively eliminates the steady-state bias caused by the extreme data heterogeneity. The proposed ZO algorithm requires two function queries per iteration and supports Rademacher random perturbations, thereby laying the algorithmic groundwork for future hardware-level acceleration via efficient bitwise operations.
	
	\item We establish a rigorous non-convex convergence analysis via the Lyapunov stability theory, thereby providing an $\mathcal{O}(1/T)$ convergence rate. Crucially, we theoretically reveal that momentum suppresses the heterogeneity-induced bias floor at a quadratic rate of $\mathcal{O}((1-\beta)^2)$, which completely avoids the variance explosion common in standard ZO tracking.
	
	\item We validate the profound empirical superiority of our approach through extensive experiments on distributed non-convex problems under extreme heterogeneous settings. Compared to existing representative ZO baselines, ZO-MGT achieves orders-of-magnitude tighter consensus and significantly accelerates the convergence.
\end{enumerate}

The remainder of this paper is organized as follows. Section II defines the distributed optimization problem and provides a detailed description of the proposed ZO-MGT algorithm. In Section III, we present the preliminary results for the theoretical analysis, thereby providing a comprehensive Lyapunov-based convergence analysis and the steady-state bias floor characterization. Section IV validates the effectiveness and efficiency of the proposed framework through some numerical experiments on non-convex classification tasks. Finally, Section V concludes the paper and discusses potential directions for future research.

\section{Problem and Algorithm}

\subsection{Notation}
The following notation is adopted throughout this paper. Let $\mathcal{V} \triangleq \{1, \dots, N\}$ denote the set of agents. For each agent $i \in \mathcal{V}$ at iteration $k$, the local variables are $x_{k,i}, m_{k,i}, y_{k,i} \in \mathbb{R}^d$, and their aggregate row-wise matrices are $\mathbf{X}_k, \mathbf{M}_k, \mathbf{Y}_k \in \mathbb{R}^{N \times d}$, respectively. We define $\bar{z}_k \triangleq \frac{1}{N} \sum_{i=1}^N z_{k,i}$ as the global average of any local variable $z \in \{x, m, y, \hat{g}\}$. Specifically, the averaging operator is defined as $J \triangleq \frac{1}{N}\mathbf{11}^\top$, such that for any matrix $\mathbf{A} \in \mathbb{R}^{N \times d}$, the product $J\mathbf{A} = \mathbf{1}\bar{a}^\top$ projects each row onto the average of its components. Regarding norms, we use $\|\cdot\|$ to denote the Euclidean norm for vectors, while $\|\cdot\|_F$ and $\|\cdot\|_2$ represent the Frobenius and spectral norms for matrices, respectively. The network-wide consensus error is quantified by $\Xi_k \triangleq \frac{1}{N} \|(I-J)\mathbf{X}_k\|_F^2$.

The communication topology is modeled by an undirected graph $\mathcal{G}$ and its associated mixing matrix $W \in \mathbb{R}^{N \times N}$. A key parameter in our convergence analysis is the spectral gap $\rho \triangleq \|W - J\|_2$, which satisfies $\rho \in [0, 1)$ for any connected graph. This value measures the network connectivity and determines the speed of information diffusion. Essentially, a smaller $\rho$ represents a more well-connected topology, which promotes faster consensus and more efficient information exchange among agents.

\subsection{Problem Formulation}

Consider a network of $N$ agents collaborating to solve a global optimization problem. The objective is to minimize the average of local cost functions, formulated as:
\begin{equation}
\min_{x \in \mathbb{R}^d} F(x) = \frac{1}{N} \sum_{i=1}^N f_i(x),
\label{eq:global_obj}
\end{equation}
where $f_i: \mathbb{R}^d \to \mathbb{R}$ is a smooth, non-convex cost function accessible only to agent $i$. In the derivative-free setting, the analytical gradient $\nabla f_i(x)$ is unavailable, and agents can only probe the local objective by querying function values.

Unlike centralized optimization, the agents must solve the problem in \eqref{eq:global_obj} cooperatively through local information exchange. The communication topology is modeled by an undirected graph $\mathcal{G}=\{\mathcal{V},\mathcal{E}\}$, where $\mathcal{V}=\{1,2,\dots,N\}$ is the set of agents and $\mathcal{E}\subseteq\mathcal{V}\times\mathcal{V}$ is the set of communication links. A node pair $\{i,j\}\in\mathcal{E}$ if and only if agents $i$ and $j$ can directly communicate. 

To facilitate this distributed information exchange, a mixing matrix $W = [w_{ij}] \in \mathbb{R}^{N \times N}$ is associated with the graph $\mathcal{G}$. The element $w_{ij}$ denotes the consensus weight that agent $i$ assigns to the information received from agent $j$. 

To formally analyze the convergence of the proposed algorithm, we make the following standard assumptions regarding the objective functions and the communication network topology.

\begin{assumption}
\label{ass:smooth}
The local cost function $f_i$ is $L$-smooth. There exists a constant $L > 0$ such that for all $x, y \in \mathbb{R}^d$ and all $i \in \mathcal{V}$:
\begin{equation}
\| \nabla f_i(x) - \nabla f_i(y) \| \le L \| x - y \|.
\end{equation}
\end{assumption}

\begin{assumption}
\label{ass:lower_bound}
The global objective function $F(x)$ is lower bounded, i.e., there exists a constant $F^\star > -\infty$ such that $F(x) \ge F^\star$ for all $x \in \mathbb{R}^d$.
\end{assumption}

\begin{assumption}
\label{ass:graph}
The communication graph $\mathcal{G}$ is undirected and connected. The mixing matrix $W = [w_{ij}] \in \mathbb{R}^{N \times N}$ satisfies the following properties:
\begin{enumerate}
    \item $w_{ij} > 0$ if $\{i,j\} \in \mathcal{E}$ or $i=j$; otherwise, $w_{ij} = 0$.
    \item $W \mathbf{1} = \mathbf{1}$ and $\mathbf{1}^\top W = \mathbf{1}^\top$.
    \item There exists a constant $\rho \in [0, 1)$ such that
    \begin{equation}
        \left\| W - \frac{1}{N}\mathbf{1}\mathbf{1}^\top \right\|_2 \le \rho.
    \end{equation}
\end{enumerate}
\end{assumption}

\begin{assumption}
\label{ass:heterogeneity}
To measure the data heterogeneity across the network, we assume the variance of local gradients is bounded by a constant $\zeta > 0$:
\begin{equation}
\frac{1}{N} \sum_{i=1}^N \|\nabla f_i(x) - \nabla F(x)\|^2 \le \zeta^2.
\end{equation}
\end{assumption}

\begin{remark}
The aforementioned assumptions are standardized within the literature of decentralized and derivative-free optimization \cite{mu2025variance, puDistributedStochasticGradient2021}. 
While traditional zeroth-order (ZO) frameworks frequently impose more restrictive constraints—such as requiring $O(d)$ coordinate-wise queries per iteration to achieve variance reduction or assuming data homogeneity to ensure stability \cite{mu2025variance},our analysis relies on significantly milder premises. 
In practical deployments, these conditions are widely satisfied, as $L$-smoothness is a characteristic of common objectives, and network connectivity is a fundamental requirement for multi-agent coordination.
\end{remark}

\subsection{Algorithm Description}
\label{sec:algorithm}

\begin{algorithm}[htbp]
\caption{ZO-MGT (Zeroth-Order Momentum Gradient Tracking)}
\label{alg:zo_mgt}
\begin{algorithmic}[1]
\REQUIRE Parameters $x \in \mathbb{R}^d$, local loss $f_i: \mathbb{R}^d \to \mathbb{R}$, iteration budget $T$, perturbation scale $\mu$, step size $\eta$, momentum factor $\beta$, mixing matrix $W$.

\STATE \textbf{Initialize:} $x_{0,i}$ arbitrarily for all $i \in \mathcal{V}$.

\STATE \textbf{Pre-computation}
    \STATE \hspace{\algorithmicindent} $\hat{g}_{0,i} \leftarrow \textsc{GradEst}(x_{0,i}, \mu, f_i)$
    \STATE \hspace{\algorithmicindent} $m_{0,i} \leftarrow \hat{g}_{0,i}$
    \STATE \hspace{\algorithmicindent} $y_{0,i} \leftarrow m_{0,i}$

\FOR{$k =  1, \dots, T$}
    \FOR{each agent $i \in \mathcal{V}$ \textbf{in parallel}}
        \STATE $x_{k,i} \leftarrow \sum_{j=1}^N w_{ij} x_{k-1,j} - \eta y_{k-1,i}$ 
        
        \STATE $\hat{g}_{k,i} \leftarrow \textsc{GradEst}(x_{k,i}, \mu, f_i)$ 
        
        \STATE $m_{k,i} \leftarrow \beta m_{k-1,i} + (1-\beta)\hat{g}_{k,i}$ 

        \STATE $y_{k,i} \leftarrow \sum_{j=1}^N w_{ij} y_{k-1,j} + (m_{k,i} - m_{k-1,i})$ 
    \ENDFOR
\ENDFOR

\STATE \textbf{function} $\textsc{GradEst}(x, \mu, f)$
    \STATE \hspace{\algorithmicindent} Sample $u \sim \text{Unif}(\{-1, 1\}^d)$ 
    \STATE \hspace{\algorithmicindent} $\hat{g} \leftarrow \frac{1}{\mu} [f(x + \mu u) - f(x)] u$
    \STATE \hspace{\algorithmicindent} return $\hat{g}$
\STATE \textbf{end function}
\end{algorithmic}
\end{algorithm}

To address the distributed non-convex optimization problem in a derivative-free setting, we propose the Zeroth-Order Momentum Gradient Tracking (ZO-MGT) algorithm. Algorithm \ref{alg:zo_mgt} outlines its pseudo-code. Our proposed ZO-MGT seamlessly integrates two-point gradient estimation, momentum variance reduction, and gradient tracking.

\textit{1) Zeroth-Order Estimation and Momentum Variance Reduction:} Unlike first-order methods, we consider the case that analytical gradients are unavailable to the agents in this paper. Each agent $i$ queries the objective function using a random Rademacher perturbation vector $u$ via the $\textsc{GradEst}(x_{k,i}, \mu, f_i)$ subroutine to construct a highly efficient, two-point forward-difference gradient estimator $\hat{g}_{k,i}$. Because this zeroth-order estimator inherently introduces significant sampling variance, ZO-MGT incorporates a variance reduction technique. In the momentum update step, agent $i$ computes a momentum sequence $m_{k,i}$ by using a moving average parameter $\beta \in (0, 1)$. This exponential moving average effectively filters out the sampling noise and stabilizes the local descent trajectory over time.

\textit{2) Dynamic Consensus and Gradient Tracking:} To tackle the steady-state bias induced by data heterogeneity, ZO-MGT relies on dynamic average consensus and gradient tracking technique. In the state update step, agent $i$ aggregates the variables from its neighbors to achieve network consensus, while simultaneously taking a descent step along the historical tracked direction $y_{k-1,i}$. Furthermore, the tracking update step adjusts the variable $y_{k,i}$ to track the global average momentum by aggregating the tracking values of neighboring nodes and combining them with the local momentum difference. This critical mechanism dynamically corrects the local descent direction and mitigates the steady-state bias caused by severe data heterogeneity.

The parameters $\eta$, $\beta$, and $\mu$ directly affect the convergence speed and stability of the algorithm. To facilitate the subsequent theoretical analysis, we define the matrix forms $\mathbf{X}_k, \mathbf{G}_k, \mathbf{M}_k, \mathbf{Y}_k \in \mathbb{R}^{N \times d}$ by collecting the local variables of all $N$ agents. The compact form of Algorithm \ref{alg:zo_mgt} is given as follows:

\begin{subequations}
\begin{align}
    \mathbf{X}_k &= W \mathbf{X}_{k-1} - \eta \mathbf{Y}_{k-1} \\
    \mathbf{G}_k &= \textsc{GradEst}(\mathbf{X}_k, \mu, f) \\
    \mathbf{M}_k &= \beta \mathbf{M}_{k-1} + (1-\beta) \mathbf{G}_k \\
    \mathbf{Y}_k &= W \mathbf{Y}_{k-1} + \mathbf{M}_k - \mathbf{M}_{k-1}
\end{align}
\end{subequations}

\section{Theoretical Analysis}
\label{sec:analysis}

In this section, we establish the theoretical convergence guarantees of the proposed ZO-MGT algorithm. To disentangle the coupled errors in our distributed zeroth-order setting, our analysis proceeds in several logical steps. First, we characterize the exact evolution of the global average state (Lemma \ref{lem:global_dynamics}) and bound the statistical properties---specifically the bias and variance---of the Rademacher-based two-point gradient estimator (Lemma \ref{lem:zo_properties}). Next, we analyze the core variance reduction mechanism by deriving a recursive bound for the momentum tracking error (Lemma \ref{lem:momentum_error}). To address the decentralized nature of the algorithm, we then bound the network consensus bias induced by data heterogeneity (Lemma \ref{lem:consensus_recursion}). Finally, by evaluating the global objective descent (Lemma \ref{lem:descent_lemma}), we consolidate these bounds into a unified Lyapunov framework to establish the exact $\mathcal{O}(1/\sqrt{NK})$ convergence rate of ZO-MGT (Theorem \ref{thm:convergence}).

\subsection{Preliminaries and Auxiliary Variables}

\begin{lemma}
\label{lem:global_dynamics}
Suppose the communication network topology satisfy Assumption \ref{ass:graph}. Under the exact initialization of Algorithm \ref{alg:zo_mgt} where $y_{0,i} = m_{0,i}$ for all agents $i \in \{1, \dots, N\}$, the global average sequences evolve according to the following centralized dynamics for all $k \ge 1$:
\begin{align}
    \bar{x}_k &= \bar{x}_{k-1} - \eta \bar{y}_{k-1}, \label{eq:global_x_update} \\
    \bar{y}_k &= \bar{m}_k = \frac{1}{N} \sum_{i=1}^N m_{k,i}. \label{eq:global_y_conservation}
\end{align}
\end{lemma}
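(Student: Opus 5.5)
The plan is to left-multiply the compact matrix recursions by $\frac{1}{N}\mathbf{1}^\top$ and use the column-stochasticity $\mathbf{1}^\top W = \mathbf{1}^\top$ from Assumption \ref{ass:graph}. Note that $\frac{1}{N}\mathbf{1}^\top \mathbf{Z}_k = \bar{z}_k^\top$ for any stacked variable $\mathbf{Z}_k$. Since every step is linear in the stacked variables, the argument is purely algebraic. It does not need the gradient estimator's properties, nor the specific values of $\eta$ or $\beta$.

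\textbf{Step 1 (state average).} Multiply $\mathbf{X}_k = W\mathbf{X}_{k-1} - \eta \mathbf{Y}_{k-1}$ on the left by $\frac{1}{N}\mathbf{1}^\top$. Because $\frac{1}{N}\mathbf{1}^\top W = \frac{1}{N}\mathbf{1}^\top$, this gives $\bar{x}_k^\top = \bar{x}_{k-1}^\top - \eta\,\bar{y}_{k-1}^\top$, which is \eqref{eq:global_x_update}. This holds for every $k\ge 1$ with no further hypothesis.

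\textbf{Step 2 (tracking conservation).} Averaging the tracking update $\mathbf{Y}_k = W\mathbf{Y}_{k-1} + \mathbf{M}_k - \mathbf{M}_{k-1}$ in the same way gives
\begin{equation*}
\bar{y}_k - \bar{m}_k = \bar{y}_{k-1} - \bar{m}_{k-1}, \qquad k \ge 1.
\end{equation*}
Thus the difference $\bar{y}_k - \bar{m}_k$ is invariant in $k$. I would then close the argument by induction on $k$. The base case comes from the pre-computation step, where $y_{0,i} = m_{0,i}$ for all $i$, so $\bar{y}_0 = \bar{m}_0$ and the invariant difference equals zero. Hence $\bar{y}_k = \bar{m}_k$ for all $k \ge 0$, which is \eqref{eq:global_y_conservation}.

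\textbf{Main obstacle.} There is no real obstacle here. The only point needing care is that the result relies on $W$ being doubly stochastic: specifically $\mathbf{1}^\top W = \mathbf{1}^\top$, not merely $W\mathbf{1}=\mathbf{1}$. It also relies on the exact initialization $\mathbf{Y}_0 = \mathbf{M}_0$. I would state explicitly that any initialization mismatch would persist forever as a constant offset in $\bar{y}_k - \bar{m}_k$. Finally, combining the two steps gives the substitution $\bar{x}_k = \bar{x}_{k-1} - \eta\,\bar{m}_{k-1}$, which is the form the later descent analysis will use.
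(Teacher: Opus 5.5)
Your proposal is correct and follows essentially the same route as the paper: you average the $x$- and $y$-updates using column-stochasticity $\mathbf{1}^\top W = \mathbf{1}^\top$, observe that $\bar{y}_k - \bar{m}_k$ is invariant in $k$, and conclude from the initialization $y_{0,i} = m_{0,i}$. The only difference is notational: you left-multiply the compact matrix recursions by $\frac{1}{N}\mathbf{1}^\top$, whereas the paper sums the local updates over agents and interchanges the double sum.
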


\begin{IEEEproof}
To establish the exact evolution of the global average state, we begin by analyzing the local parameter update rule for an arbitrary agent $i$:
\begin{equation}
    x_{k,i} = \sum_{j=1}^N w_{ij} x_{k-1,j} - \eta y_{k-1,i}.
\end{equation}
Summing this local update rule over all $N$ agents yields:
\begin{equation} \label{eq:sum_x_local}
    \sum_{i=1}^N x_{k,i} = \sum_{i=1}^N \left( \sum_{j=1}^N w_{ij} x_{k-1,j} \right) - \eta \sum_{i=1}^N y_{k-1,i}.
\end{equation}
By interchanging the order of the double summation, we group the consensus weights associated with the historical state $x_{k-1,j}$:
\begin{equation} \label{eq:double_sum_swap}
    \sum_{i=1}^N \sum_{j=1}^N w_{ij} x_{k-1,j} = \sum_{j=1}^N x_{k-1,j} \left( \sum_{i=1}^N w_{ij} \right).
\end{equation}
According to Assumption \ref{ass:graph}, the mixing matrix $W$ is doubly stochastic, which explicitly implies it is column-stochastic, i.e., $\sum_{i=1}^N w_{ij} = 1$ for all $j$. Applying this property to \eqref{eq:double_sum_swap}, the sum simplifies exactly to $\sum_{j=1}^N x_{k-1,j}$. Substituting this back into \eqref{eq:sum_x_local} and dividing both sides by the network size $N$, we directly obtain the global variable update:
\begin{equation}
    \bar{x}_k = \bar{x}_{k-1} - \eta \bar{y}_{k-1}.
\end{equation}

Next, we evaluate the gradient tracking sequence $\bar{y}_k$. By definition, the local tracking update is:
\begin{equation}
    y_{k,i} = \sum_{j=1}^N w_{ij} y_{k-1,j} + m_{k,i} - m_{k-1,i}.
\end{equation}
Following the exact same double-summation expansion and utilizing the column-stochastic property of $W$, summing this update over all $N$ agents and dividing by $N$ yields the macroscopic tracking recursion:
\begin{equation} \label{eq:y_recursion}
    \bar{y}_k = \bar{y}_{k-1} + \bar{m}_k - \bar{m}_{k-1}.
\end{equation}
To prove $\bar{y}_k = \bar{m}_k$ for all $k \ge 1$, we proceed by mathematical induction. For the case $k=0$, the explicit algorithmic initialization $y_{0,i} = m_{0,i}$ directly guarantees $\bar{y}_0 = \bar{m}_0$. Rearranging the recursion \eqref{eq:y_recursion} as $\bar{y}_k - \bar{m}_k = \bar{y}_{k-1} - \bar{m}_{k-1}$, we observe that the difference between the tracking sequence and the momentum sequence remains strictly constant across all iterations. Since the initial difference is $\bar{y}_0 - \bar{m}_0 = 0$, it universally follows that $\bar{y}_k = \bar{m}_k$ for all $k \ge 1$. This completes the proof.
\end{IEEEproof}

Unlike first-order methods, ZO-MGT relies entirely on derivative-free evaluations. Consequently, the gradient information injected into the global dynamics is inherently perturbed. Before analyzing the tracking mechanism, we must rigorously formalize the statistical properties, specifically the smoothing bias and variance bounds of the Rademacher-based two-point gradient estimator.

\begin{lemma}
\label{lem:zo_properties}
Suppose the local objective function $f_i$ is $L$-smooth. The two-point gradient estimator $\hat{g}_{k,i}$ which is defined in Algorithm \ref{alg:zo_mgt} with Rademacher sampling satisfies the following bias and variance bounds:
\begin{align}
    \left\| \mathbb{E}_{u}[\hat{g}_{k,i}] - \nabla f_i(x_{k,i}) \right\| \le \frac{\mu L d^{1.5}}{2} \triangleq \delta_\mu, \label{eq:bias_bound} \\
    \mathbb{E}_{u}\left[ \|\hat{g}_{k,i}\|^2 \right] \le 2d \|\nabla f_i(x_{k,i})\|^2 + \frac{\mu^2 L^2 d^3}{2} \triangleq \sigma_{k,i}^2. \label{eq:second_moment}
\end{align}
Thus, the true variance of the estimator is strictly bounded by the same second-moment quantity:
\begin{equation}
    \mathbb{E}_{u}\left[ \|\hat{g}_{k,i} - \mathbb{E}_{u}[\hat{g}_{k,i}]\|^2 \right] \le \sigma_{k,i}^2. \label{eq:variance_bound}
\end{equation}
\end{lemma}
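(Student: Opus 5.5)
The plan is to Taylor-expand the forward difference, using $L$-smoothness of $f_i$ for the remainder, and then to use two moment properties of Rademacher vectors: $\mathbb{E}_u[uu^\top]=I_d$ and $\|u\|^2=d$ deterministically. Write $x=x_{k,i}$ and $g=\nabla f_i(x)$. By $L$-smoothness,
\[
f_i(x+\mu u)-f_i(x)=\mu\langle g,u\rangle + r(u),\qquad |r(u)|\le \tfrac{L\mu^2}{2}\|u\|^2=\tfrac{L\mu^2 d}{2}.
\]
Therefore $\hat g_{k,i}=uu^\top g + \tfrac{r(u)}{\mu}u$.

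\textbf{Bias.} Taking expectations, $\mathbb{E}_u[uu^\top g]=g$, since the coordinates of $u$ are independent with $\mathbb{E}[u_ju_l]=\delta_{jl}$. This gives
\[
\|\mathbb{E}_u\hat g_{k,i}-g\| = \tfrac1\mu\|\mathbb{E}_u[r(u)u]\| \le \tfrac1\mu\,\mathbb{E}_u[|r(u)|\,\|u\|] \le \tfrac1\mu\cdot\tfrac{L\mu^2 d}{2}\cdot\sqrt d=\tfrac{\mu L d^{1.5}}{2},
\]
which is $\delta_\mu$, using Jensen and $\|u\|=\sqrt d$.

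\textbf{Second moment.} Apply $\|a+b\|^2\le 2\|a\|^2+2\|b\|^2$ with $a=uu^\top g$ and $b=\tfrac{r(u)}{\mu}u$. For the first term, $\|uu^\top g\|^2=\langle u,g\rangle^2\|u\|^2=d\langle u,g\rangle^2$, and $\mathbb{E}_u\langle u,g\rangle^2=g^\top\mathbb{E}[uu^\top]g=\|g\|^2$. So $2\mathbb{E}\|a\|^2=2d\|g\|^2$, which is the first term of $\sigma_{k,i}^2$. For the second term, $\|b\|^2\le \tfrac{1}{\mu^2}\cdot\tfrac{L^2\mu^4d^2}{4}\cdot d=\tfrac{\mu^2L^2d^3}{4}$, so $2\mathbb{E}\|b\|^2\le \tfrac{\mu^2L^2d^3}{2}$. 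This matches the stated constant exactly, so the factor-2 split is the right one and no sharper Young weighting is needed.

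\textbf{Variance.} This follows from $\mathbb{E}\|\hat g-\mathbb{E}\hat g\|^2=\mathbb{E}\|\hat g\|^2-\|\mathbb{E}\hat g\|^2\le \mathbb{E}\|\hat g\|^2\le\sigma_{k,i}^2$.

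The step that needs the most care is the second-moment bound. The constants depend on the Rademacher facts $\|u\|^2=d$ exactly (rather than in expectation as in the Gaussian case) and $\mathbb{E}[uu^\top]=I$. The remainder bound must also be applied pointwise in $u$ before taking expectations. All randomness here is over $u$ alone, conditional on $x_{k,i}$.
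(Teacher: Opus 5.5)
Your proof is correct and follows essentially the same route as the paper: you use the quadratic remainder bound $|r(u)|\le L\mu^2 d/2$ together with $\mathbb{E}[uu^\top]=I_d$ and $\|u\|^2=d$, a factor-2 split for the second moment (your vector split coincides with the paper's scalar split, since both pieces are parallel to $u$), and the variance decomposition. The only cosmetic difference is in the bias step, where the paper works with the integral form $\int_0^1 uu^\top(\nabla f_i(x+t\mu u)-\nabla f_i(x))\,dt$ and uses $\|uu^\top\|_2=d$, whereas you bound the scalar remainder times $\|u\|=\sqrt d$ directly; both give the same constant $\delta_\mu$.
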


\begin{IEEEproof}
We rigorously analyze the statistical properties of the local zeroth-order gradient estimator for an arbitrary agent $i$ at iteration $k$. By the definition of $\textsc{GradEst}$, the forward-difference estimator is given by
\begin{equation}
	\hat{g}_{k,i} = \frac{f_i(x_{k,i}+\mu u_{k,i}) - f_i(x_{k,i})}{\mu} u_{k,i}. 
\end{equation}
The random perturbation vector $u_{k,i}$ is drawn from a Rademacher distribution, which inherently satisfies the following statistical identities: $\mathbb{E}[u_{k,i}] = \mathbf{0}$, $\mathbb{E}[u_{k,i}u_{k,i}^\top] = I_d$, and $\|u_{k,i}\|^2 = d$.

To establish the bias bound \eqref{eq:bias_bound}, we rewrite the function difference using the fundamental theorem of calculus as an integral along the perturbation direction:
\begin{equation}
    f_i(x_{k,i}+\mu u_{k,i}) - f_i(x_{k,i}) = \mu \int_{0}^{1} \nabla f_i(x_{k,i} + t\mu u_{k,i})^\top u_{k,i} \, dt.
\end{equation}
Substituting this integral back into the definition of the estimator yields:
\begin{equation} \label{eq:g_integral}
    \hat{g}_{k,i} = \left( \int_{0}^{1} \nabla f_i(x_{k,i} + t\mu u_{k,i})^\top u_{k,i} \, dt \right) u_{k,i}.
\end{equation}
Taking the expectation with respect to $u_{k,i}$, we can express the true gradient using the identity $\nabla f_i(x_{k,i}) = \mathbb{E}[u_{k,i} u_{k,i}^\top] \nabla f_i(x_{k,i}) = \mathbb{E}[u_{k,i} u_{k,i}^\top \nabla f_i(x_{k,i})]$. Subtracting this from the expectation of \eqref{eq:g_integral} allows us to group the terms inside a unified integral:
\begin{align} \label{eq:bias_integral_form}
    &\mathbb{E}[\hat{g}_{k,i}] - \nabla f_i(x_{k,i}) \nonumber \\
    &= \mathbb{E}\left[ \int_{0}^{1} u_{k,i} u_{k,i}^\top \Big( \nabla f_i(x_{k,i} + t\mu u_{k,i}) - \nabla f_i(x_{k,i}) \Big) \, dt \right].
\end{align}
Taking the norm on both sides, we apply Jensen's inequality to move the norm inside both the expectation and the integral. Furthermore, we utilize the property that $u_{k,i} u_{k,i}^\top$ is a rank-1 matrix with a non-zero eigenvalue of $\|u_{k,i}\|^2 = d$, which directly implies its spectral norm is $\|u_{k,i} u_{k,i}^\top\|_2 = d$. Applying the $L$-smoothness assumption ($\|\nabla f(x) - \nabla f(y)\| \le L\|x-y\|$), the bias is explicitly bounded by:
\begin{align}
    \|\mathbb{E}[\hat{g}_{k,i}] - \nabla f_i(x_{k,i})\| 
    &\le \mathbb{E}\left[ \int_{0}^{1} \|u_{k,i} u_{k,i}^\top\|_2 \cdot L \|t\mu u_{k,i}\| \, dt \right] \nonumber \\
    &= \mathbb{E}\left[ d \cdot L \mu \sqrt{d} \int_{0}^{1} t \, dt \right] \nonumber \\
    &= d \cdot L \mu \sqrt{d} \left( \frac{1}{2} \right) = \frac{\mu L d^{1.5}}{2}.
\end{align}

To derive the second moment bound \eqref{eq:second_moment}, we utilize the squared norm of the estimator: 
\begin{equation}
	\|\hat{g}_{k,i}\|^2 = \frac{d}{\mu^2} \left( f_i(x_{k,i}+\mu u_{k,i}) - f_i(x_{k,i}) \right)^2.	
\end{equation}
By adding and subtracting the directional derivative $\mu \nabla f_i(x_{k,i})^\top u_{k,i}$ inside the squared term, and applying the basic algebraic inequality $(a+b)^2 \le 2a^2 + 2b^2$, we separate the true gradient projection from the non-linear approximation error:
\begin{align} \label{eq:squared_estimator_split}
    \|\hat{g}_{k,i}\|^2 &\le 2d \Big( \nabla f_i(x_{k,i})^\top u_{k,i} \Big)^2 + \frac{2d}{\mu^2} \Big( f_i(x_{k,i}+\mu u_{k,i}) \nonumber \\
    &\quad - f_i(x_{k,i}) - \mu \nabla f_i(x_{k,i})^\top u_{k,i} \Big)^2.
\end{align}
According to the standard quadratic bound derived from $L$-smoothness, the absolute approximation error is strictly bounded by $\frac{L}{2} \|\mu u_{k,i}\|^2 = \frac{L \mu^2 d}{2}$. Substituting it into \eqref{eq:squared_estimator_split} yields:
\begin{equation} \label{eq:squared_bound_pre_exp}
    \|\hat{g}_{k,i}\|^2 \le 2d \Big( \nabla f_i(x_{k,i})^\top u_{k,i} \Big)^2 + 2d \left( \frac{L \mu^2 d}{2 \mu} \right)^2.
\end{equation}
Taking the expectation with respect to $u_{k,i}$, the first term is evaluated by expressing the squared inner product as a quadratic form: 
\begin{align} \label{eq:expectation_quadratic}
    \mathbb{E}\Big[ (\nabla f_i(x_{k,i})^\top u_{k,i})^2 \Big] &= \mathbb{E}\Big[ \nabla f_i(x_{k,i})^\top u_{k,i} u_{k,i}^\top \nabla f_i(x_{k,i}) \Big] \nonumber \\
    &= \nabla f_i(x_{k,i})^\top \mathbb{E}[u_{k,i} u_{k,i}^\top] \nabla f_i(x_{k,i}) \nonumber \\
    &= \nabla f_i(x_{k,i})^\top I_d \nabla f_i(x_{k,i}) \nonumber \\
    &= \|\nabla f_i(x_{k,i})\|^2.
\end{align}
Substituting \eqref{eq:expectation_quadratic} back into the expectation of \eqref{eq:squared_bound_pre_exp} completely resolves the second moment bound:
\begin{equation}
    \mathbb{E}[\|\hat{g}_{k,i}\|^2] \le 2d \|\nabla f_i(x_{k,i})\|^2 + \frac{\mu^2 L^2 d^3}{2} = \sigma_{k,i}^2.
\end{equation}

Finally, to establish the variance bound \eqref{eq:variance_bound}, we apply the fundamental statistical variance decomposition $\mathbb{E}[\|X - \mathbb{E}[X]\|^2] = \mathbb{E}[\|X\|^2] - \|\mathbb{E}[X]\|^2$. Since the squared norm of the expectation is always non-negative, i.e., $\|\mathbb{E}[\hat{g}_{k,i}]\|^2 \ge 0$, the variance is strictly upper-bounded by the second moment:
\begin{equation}
    \mathbb{E}\left[ \|\hat{g}_{k,i} - \mathbb{E}[\hat{g}_{k,i}]\|^2 \right] \le \mathbb{E}[\|\hat{g}_{k,i}\|^2] \le \sigma_{k,i}^2.
\end{equation}
This completes the proof.
\end{IEEEproof}

\begin{remark}
Lemma \ref{lem:zo_properties} establishes the fundamental statistical properties of the Rademacher-based zeroth-order estimator. Notably, its variance bound matches the standard $\mathcal{O}(d)$ dimensional dependence inherent in traditional continuous smoothing techniques (such as Gaussian or uniform spherical sampling). The critical takeaway is that by restricting perturbations to the discrete set $\{-1, 1\}^d$, our estimator preserves the requisite statistical guarantees for convergence, while effectively circumventing the computational overhead associated with continuous random variable generation and non-linear Euclidean normalizations.
\end{remark}

\subsection{Recursive Error Bounds}
With the fundamental statistical properties established, we now investigate the core variance reduction mechanism of our algorithm. The following lemma demonstrates how the momentum update iteratively suppresses the ZO sampling variance, while isolating the deterministic tracking drift.

\begin{lemma}
\label{lem:momentum_error}
Suppose Assumptions \ref{ass:smooth}--\ref{ass:heterogeneity} hold. Let $\mathbf{X}_k = [x_{k,1}, \dots, x_{k,N}]^\top \in \mathbb{R}^{N \times d}$ denote the local variable matrix and define the averaging matrix $J = \frac{1}{N}\mathbf{11}^\top$. We define the network consensus error in its equivalent matrix and summation forms as 
\begin{equation}
	\Xi_{k} \triangleq \frac{1}{N} \|(I-J)\mathbf{X}_k\|_F^2 = \frac{1}{N}\sum_{i=1}^N \|x_{k,i} - \bar{x}_{k}\|^2.
\end{equation}
Based on the global dynamics $\bar{x}_{k+1} = \bar{x}_k - \eta \bar{m}_k$, the global momentum tracking error $E_k \triangleq \bar{m}_k - \nabla F(\bar{x}_k)$ satisfies the following recursive bound:
\begin{align}
    \mathbb{E}[\|E_{k+1}\|^2] &\le \beta \mathbb{E}[\|E_k\|^2] \nonumber \\
    &\quad + \frac{4d(1-\beta)^2}{N} \mathbb{E}\|\nabla F(\bar{x}_{k+1})\|^2 + \mathcal{T}_{drift},
\end{align}
where $\mathcal{T}_{drift}$ explicitly encapsulates the dynamic tracking drift, the expected network consensus error, and the fundamental zeroth-order variance:
\begin{align}
    \mathcal{T}_{drift} &= \frac{2 \beta^2 L^2 \eta^2}{1-\beta} \mathbb{E}[\|\bar{m}_{k}\|^2] \nonumber \\
    &\quad+ 4L^2(1-\beta)\left(1 + \frac{d(1-\beta)}{N}\right) \mathbb{E}[\Xi_{k+1}] \nonumber \\
    &\quad + \frac{(1-\beta)^2}{N}\left( 4d\zeta^2 + \frac{\mu^2 L^2 d^3}{2} \right) + 4(1-\beta)\delta_\mu^2.
\end{align}
\end{lemma}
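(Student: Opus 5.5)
We start by writing the averaged momentum recursion. Averaging the momentum step of Algorithm \ref{alg:zo_mgt} over agents gives $\bar{m}_{k+1} = \beta \bar{m}_k + (1-\beta)\bar{\hat g}_{k+1}$. Subtracting $\nabla F(\bar{x}_{k+1})$ and adding and subtracting $\beta \nabla F(\bar{x}_k)$ yields the decomposition
\begin{equation*}
E_{k+1} = \beta E_k + \beta\big(\nabla F(\bar{x}_k) - \nabla F(\bar{x}_{k+1})\big) + (1-\beta)\big(\bar{\hat g}_{k+1} - \nabla F(\bar{x}_{k+1})\big).
\end{equation*}
We then split the last bracket into a zero-mean noise part $\xi_{k+1} \triangleq \bar{\hat g}_{k+1} - \frac{1}{N}\sum_i \mathbb{E}_u[\hat g_{k+1,i}]$ and a deterministic part $b_{k+1} \triangleq \frac{1}{N}\sum_i \mathbb{E}_u[\hat g_{k+1,i}] - \nabla F(\bar{x}_{k+1})$.

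We condition on the sigma-field generated up to the computation of $\mathbf{X}_{k+1}$. Note that $\mathbf{X}_{k+1}$ depends only on $\mathbf{X}_k$ and $\mathbf{Y}_k$, so it is measurable before $u_{k+1,i}$ is drawn. Consequently, the cross term between $\xi_{k+1}$ and everything else vanishes in expectation. This gives
\begin{equation*}
\mathbb{E}\|E_{k+1}\|^2 = \mathbb{E}\|\beta E_k + \beta D_k + (1-\beta) b_{k+1}\|^2 + (1-\beta)^2\mathbb{E}\|\xi_{k+1}\|^2,
\end{equation*}
where $D_k = \nabla F(\bar x_k)-\nabla F(\bar x_{k+1})$. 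Because the Rademacher vectors are independent across agents, $\mathbb{E}\|\xi_{k+1}\|^2 = \frac{1}{N^2}\sum_i \mathrm{Var}(\hat g_{k+1,i}) \le \frac{1}{N^2}\sum_i \sigma_{k+1,i}^2$ by Lemma \ref{lem:zo_properties}. We then bound $\|\nabla f_i(x_{k+1,i})\|^2 \le 2\|\nabla f_i(x_{k+1,i}) - \nabla f_i(\bar x_{k+1})\|^2 + 2\|\nabla f_i(\bar x_{k+1})\|^2$. The first piece is controlled by $L^2\|x_{k+1,i}-\bar x_{k+1}\|^2$. For the second, we apply Assumption \ref{ass:heterogeneity} to get $\frac1N\sum_i\|\nabla f_i(\bar x_{k+1})\|^2 \le \|\nabla F(\bar x_{k+1})\|^2 + \zeta^2$. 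Collecting terms produces exactly $\frac{4d(1-\beta)^2}{N}\|\nabla F(\bar x_{k+1})\|^2$, the $\frac{4d(1-\beta)^2}{N}\zeta^2$ and $\frac{(1-\beta)^2\mu^2L^2d^3}{2N}$ constants, and a $\frac{4dL^2(1-\beta)^2}{N}\Xi_{k+1}$ contribution.

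For the deterministic part, we apply Young's inequality $\|a+b\|^2 \le (1+\epsilon)\|a\|^2 + (1+1/\epsilon)\|b\|^2$ with $a=\beta E_k$ and $\epsilon$ chosen so that $(1+\epsilon)\beta^2 \le \beta$, namely $\epsilon = (1-\beta)/\beta$. Then $1+1/\epsilon = 1/(1-\beta)$. The remainder $\|\beta D_k + (1-\beta)b_{k+1}\|^2$ is split again by $(a+b)^2\le 2a^2+2b^2$. This gives $\frac{2\beta^2}{1-\beta}\|D_k\|^2 \le \frac{2\beta^2L^2\eta^2}{1-\beta}\|\bar m_k\|^2$, using $L$-smoothness of $F$ and Lemma \ref{lem:global_dynamics} for $\bar x_{k+1}-\bar x_k = -\eta\bar m_k$. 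It also gives $2(1-\beta)\|b_{k+1}\|^2$. For $b_{k+1}$ we write $b_{k+1} = \frac1N\sum_i(\mathbb{E}\hat g_{k+1,i}-\nabla f_i(x_{k+1,i})) + \frac1N\sum_i(\nabla f_i(x_{k+1,i})-\nabla f_i(\bar x_{k+1}))$. Using Lemma \ref{lem:zo_properties}, Jensen, and $L$-smoothness, we obtain $\|b_{k+1}\|^2 \le 2\delta_\mu^2 + 2L^2\Xi_{k+1}$. This yields $4(1-\beta)\delta_\mu^2 + 4L^2(1-\beta)\Xi_{k+1}$, which combines with the noise-induced $\Xi$ term into $4L^2(1-\beta)(1+\frac{d(1-\beta)}{N})\Xi_{k+1}$.

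The main obstacle is the conditioning argument that kills the cross term. One must verify carefully that $\bar x_{k+1}$, $\bar m_k$, $E_k$, and $\mathbf{X}_{k+1}$ are all measurable before the fresh perturbations are drawn, and that $\xi_{k+1}$ has conditional mean zero. Only then can the variance enter with the favorable factor $(1-\beta)^2/N$. The remaining work is bookkeeping of the constants so that they match the stated ones.
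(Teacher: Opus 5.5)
Your proposal is correct and follows essentially the same route as the paper's proof. Both use the same decomposition of $E_{k+1}$ into a measurable drift and a conditionally zero-mean noise term, the orthogonal split obtained by conditioning before the fresh Rademacher draws, the variance bound via Lemma~\ref{lem:zo_properties} and Assumption~\ref{ass:heterogeneity}, and the Peter--Paul step with parameter $(1-\beta)/\beta$ on the drift, yielding exactly the stated constants. Your remark that $\mathbf{X}_{k+1}=W\mathbf{X}_k-\eta\mathbf{Y}_k$ is fixed before $u_{k+1,i}$ is sampled justifies the cross-term cancellation slightly more carefully than the paper does.
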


\begin{IEEEproof}
Let $E_{k+1} \triangleq \bar{m}_{k+1} - \nabla F(\bar{x}_{k+1})$ denote the global momentum tracking error. By averaging the local momentum update rule across all agents, the global momentum sequence evolves as
\begin{equation}
	\bar{m}_{k+1} = \beta \bar{m}_k + (1-\beta)\bar{g}_{k+1},
\end{equation}
where $\bar{g}_{k+1} = \frac{1}{N}\sum_{i=1}^N \hat{g}_{k+1,i}$. To establish a recursive relationship, we expand $E_{k+1}$ by adding and subtracting $\beta \nabla F(\bar{x}_k)$ and $\beta \nabla F(\bar{x}_{k+1})$:
\begin{align}
    E_{k+1} &= \beta(\bar{m}_k - \nabla F(\bar{x}_k)) + \beta(\nabla F(\bar{x}_k) - \nabla F(\bar{x}_{k+1})) \nonumber \\
    &\quad + (1-\beta)(\bar{g}_{k+1} - \nabla F(\bar{x}_{k+1})).
\end{align}
To isolate the pure sampling variance, we decompose the current gradient estimator into its expected value and a zero-mean fluctuation. This naturally splits the tracking error into a deterministic drift component $A_k$ and a stochastic noise component $\xi_{k+1}$:
\begin{align}
    A_k &\triangleq \beta E_k + \beta(\nabla F(\bar{x}_k) - \nabla F(\bar{x}_{k+1})) \nonumber \\
    &\quad + (1-\beta)\left(\frac{1}{N}\sum_{i=1}^N \mathbb{E}_{u}[\hat{g}_{k+1,i}] - \nabla F(\bar{x}_{k+1})\right), \\
    \xi_{k+1} &\triangleq \frac{1-\beta}{N} \sum_{i=1}^N (\hat{g}_{k+1,i} - \mathbb{E}_{u}[\hat{g}_{k+1,i}]).
\end{align}

Conditioned on the algorithm state at iteration $k$, the historical variables are fixed, rendering $A_k$ deterministic. Since the Rademacher perturbation vectors $u_{k+1,i}$ are sampled independently with zero mean, the conditional expectation of the noise term $\xi_{k+1}$ is strictly zero. Consequently, the cross-product vanishes upon taking the total expectation, yielding an orthogonal variance split:
\begin{equation} \label{eq:var_split}
    \mathbb{E}[\|E_{k+1}\|^2] = \mathbb{E}[\|A_k\|^2] + \mathbb{E}[\|\xi_{k+1}\|^2].
\end{equation}

We first evaluate the stochastic noise term $\xi_{k+1}$. Leveraging the independence of sampling across different agents, the cross-covariance terms vanish. Substituting the explicit variance bound from Lemma \ref{lem:zo_properties}, we obtain:
\begin{equation} \label{eq:xi_initial}
    \mathbb{E}[\|\xi_{k+1}\|^2] \le \frac{(1-\beta)^2}{N^2} \sum_{i=1}^N \left( 2d\mathbb{E}\|\nabla f_i(x_{k+1,i})\|^2 + \frac{\mu^2 L^2 d^3}{2} \right).
\end{equation}
To connect the local gradients to the global objective, we apply the standard inequality $\|a+b\|^2 \le 2\|a\|^2 + 2\|b\|^2$ and $L$-smoothness to bound the local deviation:
\begin{equation} \label{eq:local_to_average_grad}
    \|\nabla f_i(x_{k+1,i})\|^2 \le 2L^2\|x_{k+1,i} - \bar{x}_{k+1}\|^2 + 2\|\nabla f_i(\bar{x}_{k+1})\|^2.
\end{equation}
Averaging \eqref{eq:local_to_average_grad} over all $N$ agents, the first term strictly forms the consensus error $2L^2 \Xi_{k+1}$. For the second term, we apply the variance decomposition property $\frac{1}{N}\sum \|z_i\|^2 = \|\bar{z}\|^2 + \frac{1}{N}\sum \|z_i - \bar{z}\|^2$ and invoke the data heterogeneity bound (Assumption \ref{ass:heterogeneity}), yielding:
\begin{align} \label{eq:grad_expansion}
    \frac{1}{N}\sum_{i=1}^N \|\nabla f_i(x_{k+1,i})\|^2 &\le 2L^2 \Xi_{k+1} + 2\|\nabla F(\bar{x}_{k+1})\|^2 \nonumber \\
    &\quad + 2\zeta^2.
\end{align}
Substituting \eqref{eq:grad_expansion} back into \eqref{eq:xi_initial} explicitly isolates the global gradient and noise bounds:
\begin{align} \label{eq:xi_bound_explicit}
    \mathbb{E}[\|\xi_{k+1}\|^2] &\le \frac{4d(1-\beta)^2}{N} \mathbb{E}\|\nabla F(\bar{x}_{k+1})\|^2 \nonumber \\
    &\quad + \frac{4dL^2(1-\beta)^2}{N}\mathbb{E}[\Xi_{k+1}] \nonumber \\
    &\quad + \frac{(1-\beta)^2}{N} \left( 4d\zeta^2 + \frac{\mu^2 L^2 d^3}{2} \right).
\end{align}

Next, we bound the deterministic drift $A_k$. To enforce a strict contraction on the historical error $E_k$, we apply the Peter-Paul inequality, $\|a+b\|^2 \le (1+\alpha)\|a\|^2 + (1+1/\alpha)\|b\|^2$, by choosing $\alpha = \frac{1-\beta}{\beta}$. This strictly ensures the coefficient of $\|E_k\|^2$ is $(1+\alpha)\beta^2 = \beta$, while the residual term is penalized by $(1+1/\alpha) = \frac{1}{1-\beta}$:
\begin{align} \label{eq:Ak_peter_paul}
    \|A_k\|^2 &\le \beta \|E_k\|^2 + \frac{1}{1-\beta} \Big\| \beta(\nabla F(\bar{x}_k) - \nabla F(\bar{x}_{k+1})) \nonumber \\
    &\quad + (1-\beta)\left(\frac{1}{N}\sum_{i=1}^N \mathbb{E}_{u}[\hat{g}_{k+1,i}] - \nabla F(\bar{x}_{k+1})\right) \Big\|^2.
\end{align}
Applying $\|x+y\|^2 \le 2\|x\|^2 + 2\|y\|^2$ to the squared residual, we bound the two components separately. The gradient tracking difference is bounded by $L$-smoothness and the update rule:
\begin{equation} \label{eq:ak_part1}
    2\beta^2 \|\nabla F(\bar{x}_k) - \nabla F(\bar{x}_{k+1})\|^2 \le 2\beta^2 L^2 \eta^2 \|\bar{m}_k\|^2.
\end{equation}
For the expected zeroth-order deviation, we explicitly split it into the fundamental smoothing bias and the spatial consensus deviation:
\begin{align} \label{eq:ak_part2}
    &2(1-\beta)^2 \left\| \frac{1}{N}\sum_{i=1}^N (\mathbb{E}_{u}[\hat{g}_{k+1,i}] - \nabla f_i(\bar{x}_{k+1})) \right\|^2 \nonumber \\
    &\le 4(1-\beta)^2 \frac{1}{N}\sum_{i=1}^N \|\mathbb{E}_{u}[\hat{g}_{k+1,i}] - \nabla f_i(x_{k+1,i})\|^2 \nonumber \\
    &\quad + 4(1-\beta)^2 \frac{1}{N}\sum_{i=1}^N \|\nabla f_i(x_{k+1,i}) - \nabla f_i(\bar{x}_{k+1})\|^2 \nonumber \\
    &\le 4(1-\beta)^2 \delta_\mu^2 + 4(1-\beta)^2 L^2 \Xi_{k+1},
\end{align}
where we utilized the bias bound $\delta_\mu$ derived in Lemma \ref{lem:zo_properties} and applied Jensen's inequality over the summation. Substituting \eqref{eq:ak_part1} and \eqref{eq:ak_part2} back into \eqref{eq:Ak_peter_paul}, and scaling by the penalty coefficient $\frac{1}{1-\beta}$, we obtain the complete bound for the drift component:
\begin{align} \label{eq:Ak_bound_final}
    \mathbb{E}[\|A_k\|^2] &\le \beta \mathbb{E}[\|E_k\|^2] + \frac{2\beta^2 L^2 \eta^2}{1-\beta} \mathbb{E}[\|\bar{m}_k\|^2] \nonumber \\
    &\quad + 4(1-\beta)\left(\delta_\mu^2 + L^2 \mathbb{E}[\Xi_{k+1}]\right).
\end{align}

Finally, substituting the explicit variance bound \eqref{eq:xi_bound_explicit} and the deterministic drift bound \eqref{eq:Ak_bound_final} into the orthogonal split \eqref{eq:var_split}, and collecting the coefficients for the network consensus error $\mathbb{E}[\Xi_{k+1}]$ and the constant variance terms into $\mathcal{T}_{drift}$, precisely establishes the stated recursive inequality. This completes the proof.
\end{IEEEproof}

\begin{remark}
Lemma \ref{lem:momentum_error} formalizes the core variance reduction mechanism of ZO-MGT. The orthogonal variance split \eqref{eq:var_split} and the Peter-Paul inequality \eqref{eq:Ak_peter_paul} ensure a strict geometric contraction, that is controlled by $\beta$, on the historical tracking error. Furthermore, examining the residual drift $\mathcal{T}_{drift}$ reveals that both the zeroth-order sampling noise and the data heterogeneity bound $\zeta^2$ are suppressed by a factor of $\mathcal{O}((1-\beta)^2 / N)$. This theoretically supports our design intuition: a large momentum factor ($\beta \to 1$) combined with network aggregation acts as a low-pass filter. It effectively prevents the severe variance explosion that typically occurs when standard gradient tracking is applied to noisy derivative-free estimators.
\end{remark}

The momentum tracking error derived above is tightly coupled with the network consensus error $\Xi_k$. Due to the decentralized nature of the network, extreme data heterogeneity $\zeta^2$ constantly drives the agents away from the global average. To decouple this interaction and guarantee network agreement, we establish a cumulative bound for the consensus error over $T$ iterations.

\begin{lemma}
\label{lem:consensus_recursion}
Provided that Assumptions \ref{ass:smooth}--\ref{ass:heterogeneity} hold. The consensus error $\Xi_k \triangleq \frac{1}{N} \|(I-J)\mathbf{X}_k\|_F^2$ satisfies the following expected cumulative bound over $T$ iterations:
\begin{align} \label{eq:finite_sum_xi}
    \sum_{k=0}^{T-1} \mathbb{E}[\Xi_k] &\le \frac{C_0}{(1-\rho)^3} \mathbb{E}[\Xi_0] + \frac{C_1 \eta^2}{(1-\rho)^4} \sum_{k=0}^{T-1} \mathbb{E}\|\nabla F(\bar{x}_k)\|^2 \nonumber \\
    &\quad + \frac{C_2 T \eta^2}{(1-\rho)^4} \sigma_{ZO}^2 + \frac{C_3 \eta^2}{(1-\rho)^4} \frac{1}{N} \mathbb{E}\|\mathbf{G}_0\|_F^2,
\end{align}
where $\sigma_{ZO}^2 \triangleq 4d\zeta^2 + \frac{1}{2}\mu^2 L^2 d^3$ represents the fundamental zeroth-order variance floor. The explicit constants $C_0, C_1, C_2, C_3$ depend strictly on the momentum parameter $\beta$ and dimension $d$, and are independent of $T$ and $\eta$.
\end{lemma}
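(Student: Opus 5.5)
We will prove the bound by a coupled recursion for three consensus-type quantities, then sum over $k$ and unroll. Define the consensus error $\Xi_k$, the tracking deviation $\Psi_k \triangleq \frac{1}{N}\|(I-J)\mathbf{Y}_k\|_F^2$, and the momentum increment energy $\Delta_k \triangleq \frac{1}{N}\|\mathbf{M}_k-\mathbf{M}_{k-1}\|_F^2$.

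\emph{Consensus recursion.} Multiplying the compact form by $(I-J)$ and using $(I-J)W=(W-J)(I-J)$ gives $(I-J)\mathbf{X}_k=(W-J)(I-J)\mathbf{X}_{k-1}-\eta(I-J)\mathbf{Y}_{k-1}$. Young's inequality with weight $\frac{1-\rho}{\rho}$ then yields
\begin{equation*}
\Xi_k\le \rho\,\Xi_{k-1}+\frac{\eta^2}{1-\rho}\Psi_{k-1}.
\end{equation*}

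\emph{Tracking recursion.} The same manipulation on the $\mathbf{Y}$ update, together with $\|I-J\|_2\le1$, gives
\begin{equation*}
\Psi_k\le \rho\,\Psi_{k-1}+\frac{1}{1-\rho}\Delta_k.
\end{equation*}

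\emph{Momentum increment.} Write $\mathbf{M}_k-\mathbf{M}_{k-1}=(1-\beta)(\mathbf{G}_k-\mathbf{M}_{k-1})$ and unroll the exponential moving average, $\mathbf{M}_{k-1}=\beta^{k-1}\mathbf{G}_0(1-\beta)^{-1}\cdot(\ldots)$. It is cleaner to bound
\begin{equation*}
\Delta_k\le 2(1-\beta)^2\Big(\tfrac1N\|\mathbf{G}_k\|_F^2+\tfrac1N\|\mathbf{M}_{k-1}\|_F^2\Big),
\end{equation*}
and, by Jensen on the convex combination, $\frac1N\|\mathbf{M}_{k-1}\|_F^2\le \beta^{k-1}\frac1N\|\mathbf{G}_0\|_F^2+(1-\beta)\sum_{j=1}^{k-1}\beta^{k-1-j}\frac1N\|\mathbf{G}_j\|_F^2$. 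Taking the expectation of $\frac1N\|\mathbf{G}_j\|_F^2$ and applying Lemma \ref{lem:zo_properties} together with the averaging step \eqref{eq:grad_expansion} from the proof of Lemma \ref{lem:momentum_error} gives
\begin{equation*}
\frac1N\mathbb{E}\|\mathbf{G}_j\|_F^2\le 4dL^2\mathbb{E}\Xi_j+4d\,\mathbb{E}\|\nabla F(\bar{x}_j)\|^2+\sigma_{ZO}^2.
\end{equation*}
Summing over $k$, the geometric weights in $\beta$ are absorbed into a constant $c(\beta)$. The result is
\begin{equation*}
\sum_k \mathbb{E}\Delta_k\le c(\beta)\Big[\tfrac1N\mathbb{E}\|\mathbf{G}_0\|_F^2+\sum_k\big(4dL^2\mathbb{E}\Xi_k+4d\,\mathbb{E}\|\nabla F(\bar x_k)\|^2+\sigma_{ZO}^2\big)\Big].
\end{equation*}

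\emph{Summing the linear recursions.} Summing $a_k\le\rho a_{k-1}+b_k$ over $k$ gives $\sum a_k\le \frac{1}{1-\rho}(a_0+\sum b_k)$. Applied to the tracking recursion,
\begin{equation*}
\sum\Psi_k\le \frac{1}{1-\rho}\Psi_0+\frac{1}{(1-\rho)^2}\sum\Delta_k,
\end{equation*}
and applied to the consensus recursion,
\begin{equation*}
\sum\Xi_k\le\frac{1}{1-\rho}\Xi_0+\frac{\eta^2}{(1-\rho)^2}\sum\Psi_k.
\end{equation*}
Chaining these produces the $(1-\rho)^{-4}\eta^2$ factors in front of the gradient, $\sigma_{ZO}^2T$, and $\mathbf{G}_0$ terms. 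The initial tracking term is handled by $\Psi_0\le\frac1N\|\mathbf{G}_0\|_F^2$, since $\mathbf{Y}_0=\mathbf{M}_0=\mathbf{G}_0$. The $\Xi_0$ coefficient is $(1-\rho)^{-1}$, which is weaker than the stated $(1-\rho)^{-3}$; it is a valid bound because $(1-\rho)^{-1}\le(1-\rho)^{-3}$ for $\rho\in[0,1)$.

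\emph{Main obstacle.} The difficulty is the feedback loop: $\sum\Xi_k$ reappears on the right with coefficient $\frac{4dL^2c(\beta)\eta^2}{(1-\rho)^4}$. We will impose a step-size condition making this coefficient at most $\frac12$, namely
\begin{equation*}
\eta\le\frac{(1-\rho)^2}{\sqrt{8dc(\beta)}\,L}.
\end{equation*}
We then move the term to the left and double all remaining constants, which yields $C_0,\dots,C_3$ depending only on $\beta$ and $d$. This step-size restriction is implicit in the lemma and must be stated as a requirement, presumably matching the condition later imposed in Theorem \ref{thm:convergence}. A minor additional point is index alignment: the gradient terms appear at $x_{k+1}$ rather than $x_k$, but the shift costs only a boundary term, which is covered by the $\mathbf{G}_0$ and $\Xi_0$ terms or by extending the sum by one index.
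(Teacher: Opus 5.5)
Your proposal is correct and follows essentially the same route as the paper. Both arguments use coupled Young-type recursions for $\Xi_k$ and the tracking deviation, summed and chained to give the $\eta^2(1-\rho)^{-4}$ factors. Both bound the momentum increments by $2(1-\beta)^2(\cdot)$ with Jensen on the unrolled moving average, bound $\frac{1}{N}\mathbb{E}\|\mathbf{G}_k\|_F^2$ via Lemma~\ref{lem:zo_properties} and Assumption~\ref{ass:heterogeneity}, and absorb the $\sum_k\mathbb{E}[\Xi_k]$ feedback under a step-size condition. As you flag, the paper's proof also imposes that condition ($256dL^2(1-\beta)^2\eta^2\le\frac12(1-\rho^2)^4$) without stating it in the lemma.

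The differences are cosmetic: your Young weight gives contraction $\rho$ rather than $\frac{1+\rho^2}{2}$, hence slightly different constants, and you lump the $\beta$-dependence into $c(\beta)$. Also, with your indexing the summed recursions only involve $\mathbf{G}_0,\dots,\mathbf{G}_{T-1}$, so the index-alignment worry at the end does not actually arise.
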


\begin{IEEEproof}
To establish the cumulative bound, we analyze the coupled dynamics of the expected consensus error $\mathbb{E}[\Xi_k]$ and the gradient tracking error $\Upsilon_k \triangleq \frac{1}{N}\mathbb{E}\|\mathbf{Y}_k - J\mathbf{Y}_k\|_F^2$. Recalling the contraction property of the doubly stochastic mixing matrix, i.e., $\rho \triangleq \|W-J\|_2 < 1$ and using Young's inequality, the standard decentralized dynamics yield the following single-step inequalities:
\begin{align}
    \mathbb{E}[\Xi_{k+1}] &\le \frac{1+\rho^2}{2} \mathbb{E}[\Xi_k] + \frac{2\eta^2}{1-\rho^2} \Upsilon_k, \label{eq:xi_dyn} \\
    \Upsilon_{k+1} &\le \frac{1+\rho^2}{2} \Upsilon_k + \frac{2}{1-\rho^2} \frac{1}{N} \mathbb{E}\|\Delta \mathbf{M}_{k+1}\|_F^2, \label{eq:ups_dyn}
\end{align}
where $\Delta \mathbf{M}_{k+1} \triangleq \mathbf{M}_{k+1} - \mathbf{M}_k$. To explicitly decouple these errors, we adopt a summation-based approach. Summing \eqref{eq:xi_dyn} and \eqref{eq:ups_dyn} from $k=0$ to $T-1$, and utilizing the non-negativity $\mathbb{E}[\Xi_T] \ge 0$ and $\Upsilon_T \ge 0$, we obtain:
\begin{align}
    \frac{1-\rho^2}{2} \sum_{k=0}^{T-1} \mathbb{E}[\Xi_k] &\le \mathbb{E}[\Xi_0] + \frac{2\eta^2}{1-\rho^2} \sum_{k=0}^{T-1} \Upsilon_k, \label{eq:sum_xi_intermediate} \\
    \frac{1-\rho^2}{2} \sum_{k=0}^{T-1} \Upsilon_k &\le \Upsilon_0 + \frac{2}{1-\rho^2} \sum_{k=0}^{T-1} \frac{1}{N} \mathbb{E}\|\Delta \mathbf{M}_{k+1}\|_F^2. \label{eq:sum_ups_intermediate}
\end{align}

Multiplying \eqref{eq:sum_ups_intermediate} by $\frac{4}{(1-\rho^2)^2}$, we can isolate $\frac{2\eta^2}{1-\rho^2}\sum_{k=0}^{T-1} \Upsilon_k$ and directly substitute it into \eqref{eq:sum_xi_intermediate}. Noting that the exact initialization $\mathbf{Y}_0 = \mathbf{G}_0$ implies $\Upsilon_0 \le \frac{1}{N}\mathbb{E}\|\mathbf{G}_0\|_F^2$, we multiply the resulting inequality by $\frac{2}{1-\rho^2}$ to yield the decoupled consensus bound:
\begin{align} \label{eq:sum_xi_coupled}
    \sum_{k=0}^{T-1} \mathbb{E}[\Xi_k] &\le \frac{2}{1-\rho^2} \mathbb{E}[\Xi_0] + \frac{8\eta^2}{(1-\rho^2)^3} \frac{1}{N} \mathbb{E}\|\mathbf{G}_0\|_F^2 \nonumber \\
    &\quad + \frac{16\eta^2}{(1-\rho^2)^4} \sum_{k=0}^{T-1} \frac{1}{N} \mathbb{E}\|\Delta \mathbf{M}_{k+1}\|_F^2.
\end{align}

To bound the cumulative momentum variation, we apply the inequality $\|a-b\|^2 \le 2\|a\|^2 + 2\|b\|^2$ to the update rule $\mathbf{M}_{k+1} = \beta \mathbf{M}_k + (1-\beta) \mathbf{G}_{k+1}$, which provides the single-step bound:
\begin{equation} \label{eq:single_delta_M}
    \frac{1}{N}\mathbb{E}\|\Delta \mathbf{M}_{k+1}\|_F^2 \le 2(1-\beta)^2 \left( \frac{1}{N}\mathbb{E}\|\mathbf{G}_{k+1}\|_F^2 + \frac{1}{N}\mathbb{E}\|\mathbf{M}_k\|_F^2 \right).
\end{equation}
By unrolling the momentum update, $\mathbf{M}_k$ can be expressed as a convex combination of historical gradients: $\mathbf{M}_k = \beta^k \mathbf{G}_0 + \sum_{t=1}^k (1-\beta)\beta^{k-t} \mathbf{G}_t$. Applying Jensen's inequality to the squared Frobenius norm, we have:
\begin{equation} \label{eq:jensen_M}
    \|\mathbf{M}_k\|_F^2 \le \beta^k \|\mathbf{G}_0\|_F^2 + \sum_{t=1}^k (1-\beta)\beta^{k-t} \|\mathbf{G}_t\|_F^2.
\end{equation}
Summing \eqref{eq:jensen_M} over $T$ iterations and interchanging the order of summation, the cumulative momentum error is strictly bounded by the raw gradient error:
\begin{align} \label{eq:sum_M_bound}
    \sum_{k=0}^{T-1} \frac{1}{N}\mathbb{E}\|\mathbf{M}_k\|_F^2 &\le \sum_{k=0}^{T-1} \beta^k \frac{1}{N}\mathbb{E}\|\mathbf{G}_0\|_F^2 \nonumber \\
    &\quad + \sum_{t=1}^{T-1} \frac{1}{N}\mathbb{E}\|\mathbf{G}_t\|_F^2 \sum_{k=t}^{T-1} (1-\beta)\beta^{k-t} \nonumber \\
    &\le \frac{1}{1-\beta} \frac{1}{N}\mathbb{E}\|\mathbf{G}_0\|_F^2 + \sum_{k=0}^{T-1} \frac{1}{N}\mathbb{E}\|\mathbf{G}_k\|_F^2.
\end{align}
Substituting \eqref{eq:sum_M_bound} back into the sum of \eqref{eq:single_delta_M}, the cumulative momentum difference is simplified to:
\begin{align} \label{eq:delta_M_sum_final}
    \sum_{k=0}^{T-1} \frac{1}{N}\mathbb{E}\|\Delta \mathbf{M}_{k+1}\|_F^2 &\le 4(1-\beta)^2 \sum_{k=0}^{T-1} \frac{1}{N}\mathbb{E}\|\mathbf{G}_k\|_F^2 \nonumber \\
    &\quad + 2(1-\beta) \frac{1}{N}\mathbb{E}\|\mathbf{G}_0\|_F^2.
\end{align}

Furthermore, to connect the raw gradient $\mathbf{G}_k$ with the global objective, we apply the explicit variance bound from Lemma \ref{lem:zo_properties}. Decomposing the local gradient via $L$-smoothness and invoking Assumption \ref{ass:heterogeneity}, we explicitly obtain:
\begin{equation} \label{eq:G_bound}
    \frac{1}{N}\mathbb{E}\|\mathbf{G}_k\|_F^2 \le 4dL^2 \mathbb{E}[\Xi_k] + 4d\mathbb{E}\|\nabla F(\bar{x}_k)\|^2 + \sigma_{ZO}^2.
\end{equation}

Finally, we substitute the gradient expansion \eqref{eq:G_bound} into \eqref{eq:delta_M_sum_final}, and then substitute the resulting bound into the decoupled consensus inequality \eqref{eq:sum_xi_coupled}. Collecting all identical terms yields the complete self-feedback inequality:
\begin{align} \label{eq:xi_contraction_prep}
    \sum_{k=0}^{T-1} \mathbb{E}[\Xi_k] &\le \frac{256 d L^2 (1-\beta)^2 \eta^2}{(1-\rho^2)^4} \sum_{k=0}^{T-1} \mathbb{E}[\Xi_k] \nonumber \\
    &\quad + \frac{256 d (1-\beta)^2 \eta^2}{(1-\rho^2)^4} \sum_{k=0}^{T-1} \mathbb{E}\|\nabla F(\bar{x}_k)\|^2 \nonumber \\
    &\quad + \frac{64(1-\beta)^2 T \eta^2}{(1-\rho)^4} \sigma_{ZO}^2 \nonumber \\
    &\quad + \left( \frac{8}{(1-\rho^2)^3} + \frac{32(1-\beta)}{(1-\rho^2)^4} \right) \eta^2 \frac{1}{N}\mathbb{E}\|\mathbf{G}_0\|_F^2 \nonumber \\
    &\quad + \frac{2}{1-\rho^2} \mathbb{E}[\Xi_0].
\end{align}

To guarantee a valid convergence bound, we require the coefficient of the feedback term $\sum \mathbb{E}[\Xi_k]$ on the right-hand side to be bounded by $1/2$. Under the condition $256 d L^2 (1-\beta)^2 \eta^2 \le \frac{1}{2} (1-\rho^2)^4$, we rearrange the inequality by moving the feedback term to the left-hand side, resulting in $(1 - 1/2) \sum_{k=0}^{T-1} \mathbb{E}[\Xi_k]$. Multiplying the entire inequality by 2, and utilizing the topological property $(1-\rho^2)^4 \ge (1-\rho)^4 \ge (1-\rho)^3$, we cleanly extract the structural constants: $C_0 = 4$, $C_1 = 512 d (1-\beta)^2$, $C_2 = 128 (1-\beta)^2$, and $C_3 = 16 + 64(1-\beta)$. This precisely yields the stated bound in \eqref{eq:finite_sum_xi} and completes the proof.
\end{IEEEproof}

\subsection{Global Convergence Rate}
Having bounded both the internal momentum tracking error and the decentralized network consensus error, we proceed to evaluate the descent behavior of the objective function along the global average trajectory.

\begin{lemma}
\label{lem:descent_lemma}
Suppose Assumptions \ref{ass:smooth}--\ref{ass:heterogeneity} hold. If the step size fulfills $\eta \le \frac{1}{2L}$, the global objective function satisfies the following descent inequality:
\begin{align} \label{eq:descent_inequality}
    F(\bar{x}_{k+1}) &\le F(\bar{x}_k) - \frac{\eta}{2} \|\nabla F(\bar{x}_k)\|^2 \nonumber \\
    &\quad + \frac{\eta}{2} \|\bar{m}_k - \nabla F(\bar{x}_k)\|^2 - \frac{\eta}{4} \|\bar{m}_k\|^2.
\end{align}
\end{lemma}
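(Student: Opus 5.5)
The plan is the standard descent argument for $L$-smooth functions, applied along the averaged trajectory of Lemma \ref{lem:global_dynamics}.

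\textbf{Step 1 (descent inequality).} By Lemma \ref{lem:global_dynamics}, we have $\bar{y}_k=\bar{m}_k$, so $\bar{x}_{k+1}=\bar{x}_k-\eta\bar{m}_k$. Averaging Assumption \ref{ass:smooth} over agents shows that $F$ is $L$-smooth. The quadratic upper bound then gives
\begin{equation*}
F(\bar{x}_{k+1}) \le F(\bar{x}_k) - \eta\langle \nabla F(\bar{x}_k), \bar{m}_k\rangle + \frac{L\eta^2}{2}\|\bar{m}_k\|^2 .
\end{equation*}

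\textbf{Step 2 (polarization).} Rewrite the inner product with the identity
\begin{equation*}
-\langle a,b\rangle = \tfrac12\|a-b\|^2-\tfrac12\|a\|^2-\tfrac12\|b\|^2,
\end{equation*}
taking $a=\nabla F(\bar{x}_k)$ and $b=\bar{m}_k$. This is an exact equality, with no Young-type loss, and it produces
\begin{equation*}
-\frac{\eta}{2}\|\nabla F(\bar{x}_k)\|^2+\frac{\eta}{2}\|\bar{m}_k-\nabla F(\bar{x}_k)\|^2-\frac{\eta}{2}\|\bar{m}_k\|^2 .
\end{equation*}
These are exactly the first two error terms in \eqref{eq:descent_inequality}, together with a negative multiple of $\|\bar{m}_k\|^2$.

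\textbf{Step 3 (absorbing the curvature term).} The curvature term is absorbed into the negative $\|\bar{m}_k\|^2$ term. The combined coefficient of $\|\bar{m}_k\|^2$ is $-\frac{\eta}{2}(1-L\eta)$. Under $\eta\le \frac{1}{2L}$ we have $1-L\eta\ge \tfrac12$, so this coefficient is at most $-\frac{\eta}{4}$. Substituting gives \eqref{eq:descent_inequality}.

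No stochastic argument is needed. The inequality holds pathwise, because $\bar{x}_{k+1}$ is a deterministic function of $\bar{x}_k$ and $\bar{m}_k$. The only point requiring care is confirming that $F$ inherits $L$-smoothness from the $f_i$. This follows from the triangle inequality applied to $\frac1N\sum_i(\nabla f_i(x)-\nabla f_i(y))$. I therefore expect no real obstacle; the key step is simply using the exact polarization identity rather than a lossy Young bound.
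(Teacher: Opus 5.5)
Your proposal is correct and follows the paper's proof essentially line by line. Both use the $L$-smoothness quadratic upper bound along $\bar{x}_{k+1}=\bar{x}_k-\eta\bar{m}_k$, the exact polarization identity for the cross term, and absorption of the curvature term via $-\frac{\eta}{2}(1-L\eta)\le-\frac{\eta}{4}$ (your explicit check that $F$ inherits $L$-smoothness from the $f_i$ is a detail the paper simply asserts).
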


\begin{IEEEproof}
By the $L$-smoothness of the global objective function $F$ (Assumption \ref{ass:smooth}), for any two points $x$ and $y$, we have the standard quadratic upper bound:
\begin{equation}
    F(y) \le F(x) + \langle \nabla F(x), y - x \rangle + \frac{L}{2} \|y - x\|^2.
\end{equation}
Setting $y = \bar{x}_{k+1}$ and $x = \bar{x}_k$, and substituting the global average update dynamics $\bar{x}_{k+1} - \bar{x}_k = -\eta \bar{m}_k$ established in Lemma \ref{lem:global_dynamics}, we obtain:
\begin{equation} \label{eq:smoothness_sub}
    F(\bar{x}_{k+1}) \le F(\bar{x}_k) - \eta \langle \nabla F(\bar{x}_k), \bar{m}_k \rangle + \frac{L\eta^2}{2} \|\bar{m}_k\|^2.
\end{equation}

To handle the inner product term, we utilize the fundamental algebraic identity $2\langle a, b \rangle = \|a\|^2 + \|b\|^2 - \|a - b\|^2$. By setting $a = \nabla F(\bar{x}_k)$ and $b = \bar{m}_k$, and multiplying both sides by $-\frac{\eta}{2}$, we can explicitly expand the cross term as:
\begin{align} \label{eq:inner_product_expansion}
    -\eta \langle \nabla F(\bar{x}_k), \bar{m}_k \rangle &= -\frac{\eta}{2} \|\nabla F(\bar{x}_k)\|^2 - \frac{\eta}{2} \|\bar{m}_k\|^2 \nonumber \\
    &\quad + \frac{\eta}{2} \|\bar{m}_k - \nabla F(\bar{x}_k)\|^2.
\end{align}

Next, we substitute \eqref{eq:inner_product_expansion} back into the smoothness bound \eqref{eq:smoothness_sub}. Grouping the terms associated with $\|\bar{m}_k\|^2$ together yields:
\begin{align} \label{eq:descent_grouped}
    F(\bar{x}_{k+1}) &\le F(\bar{x}_k) - \frac{\eta}{2} \|\nabla F(\bar{x}_k)\|^2 + \frac{\eta}{2} \|\bar{m}_k - \nabla F(\bar{x}_k)\|^2 \nonumber \\
    &\quad - \left( \frac{\eta}{2} - \frac{L\eta^2}{2} \right) \|\bar{m}_k\|^2.
\end{align}
We can factor out $\frac{\eta}{2}$ from the final term to rewrite its precise coefficient as $-\frac{\eta}{2}(1 - L\eta)$. 

Finally, we apply the step size restriction. Since the constant step size is strictly bounded by $\eta \le \frac{1}{2L}$, it naturally follows that $L\eta \le \frac{1}{2}$. Consequently, we have $1 - L\eta \ge \frac{1}{2}$. Applying this inequality to the momentum norm coefficient, we establish the strict upper bound:
\begin{equation}
    - \frac{\eta}{2}(1 - L\eta) \|\bar{m}_k\|^2 \le - \frac{\eta}{2} \left( \frac{1}{2} \right) \|\bar{m}_k\|^2 = - \frac{\eta}{4} \|\bar{m}_k\|^2.
\end{equation}
Substituting this final bound into \eqref{eq:descent_grouped} directly yields the stated descent inequality in \eqref{eq:descent_inequality}, which completes the proof.
\end{IEEEproof}

Finally, we consolidate the momentum tracking bound, the cumulative consensus bound, and the global descent inequality into a unified Lyapunov function. This allows us to establish the main convergence theorem and extract the exact steady-state error floor.

\begin{theorem}
\label{thm:convergence}
Let Assumptions \ref{ass:smooth}--\ref{ass:heterogeneity} hold, and assume the global objective is lower bounded such that $\inf_{x} F(x) \ge F^\star > -\infty$. Suppose the momentum factor $\beta \in (0,1)$ is chosen such that $1-\beta \le \frac{N}{16d}$. If the constant step size $\eta$ satisfies the following condition:
\begin{equation} \label{eq:step_size_condition}
    \eta \le \min \left\{ \frac{1}{2L}, \frac{1-\beta}{\sqrt{8}\beta L}, \frac{(1-\rho)^2}{L \sqrt{8 C_\Xi C_1}} \right\},
\end{equation}
where $C_\Xi$ and $C_1$ are positive structural constants independent of $T$ and $\eta$, then the sequence $\{\bar{x}_k\}_{k=0}^{T-1}$ generated by Algorithm \ref{alg:zo_mgt} satisfies:
\begin{align} \label{eq:theorem_result}
    \frac{1}{T} \sum_{k=0}^{T-1} \mathbb{E}\|\nabla F(\bar{x}_k)\|^2 &\le \frac{8(\tilde{V}_0 - F^\star)}{\eta T} \nonumber \\
    &\quad + \mathcal{O}\left( \mu^2 L^2 d^3 + \frac{(1-\beta)^2 d}{N}\zeta^2 \right),
\end{align}
where $\tilde{V}_0 \triangleq F(\bar{x}_0) + \frac{\eta}{1-\beta} \mathbb{E}\|\bar{m}_0 - \nabla F(\bar{x}_0)\|^2 + C_{init} \eta \mathbb{E}\|\mathbf{G}_0\|_F^2$ defines the exact initial augmented energy, and the constant $C_{init} > 0$ depends strictly on the network topology and the momentum parameter. 
\end{theorem}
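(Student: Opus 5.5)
We combine Lemma \ref{lem:descent_lemma}, Lemma \ref{lem:momentum_error} and Lemma \ref{lem:consensus_recursion} through the Lyapunov function
\begin{equation*}
V_k \triangleq F(\bar{x}_k) + \frac{\eta}{1-\beta}\,\mathbb{E}\|E_k\|^2, \qquad E_k = \bar{m}_k - \nabla F(\bar{x}_k).
\end{equation*}

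\textbf{Step 1 (one-step Lyapunov inequality).} Multiply the recursion of Lemma \ref{lem:momentum_error} by $\frac{\eta}{1-\beta}$ and add it to the expectation of \eqref{eq:descent_inequality}. The term $\frac{\eta\beta}{1-\beta}\mathbb{E}\|E_k\|^2$, combined with $\frac{\eta}{2}\mathbb{E}\|E_k\|^2$ from the descent lemma, must be dominated by $\frac{\eta}{1-\beta}\mathbb{E}\|E_k\|^2$. This requires $\beta+\frac{1-\beta}{2}\le 1$, which always holds, and leaves a spare $-\frac{\eta}{2}\mathbb{E}\|E_k\|^2$.

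The drift term $\frac{\eta}{1-\beta}\cdot\frac{2\beta^2L^2\eta^2}{1-\beta}\mathbb{E}\|\bar{m}_k\|^2$ must be absorbed by $-\frac{\eta}{4}\mathbb{E}\|\bar{m}_k\|^2$. This is exactly where the condition $\eta\le\frac{1-\beta}{\sqrt{8}\beta L}$ enters, since it gives $\frac{2\beta^2L^2\eta^2}{(1-\beta)^2}\le\frac14$.

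The variance coefficient on the gradient at $k+1$ is $\frac{\eta}{1-\beta}\cdot\frac{4d(1-\beta)^2}{N}\|\nabla F(\bar{x}_{k+1})\|^2 = \frac{4d(1-\beta)\eta}{N}\|\nabla F(\bar{x}_{k+1})\|^2$. Under $1-\beta\le\frac{N}{16d}$ this is at most $\frac{\eta}{4}\|\nabla F(\bar{x}_{k+1})\|^2$. After shifting the index in the telescoped sum, it eats half of the $-\frac{\eta}{2}\|\nabla F\|^2$ descent, leaving $-\frac{\eta}{4}$. One extra boundary term $\mathbb{E}\|\nabla F(\bar{x}_T)\|^2$ appears; I would handle it either by bounding it at the final index or by summing to $T$ and dropping a nonpositive term.

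What remains is
\begin{equation*}
\frac{\eta}{1-\beta}\,\mathcal{T}'_{drift}
= 4\eta L^2\Big(1+\tfrac{d(1-\beta)}{N}\Big)\mathbb{E}[\Xi_{k+1}]
+ \frac{\eta(1-\beta)}{N}\sigma_{ZO}^2 + 4\eta\delta_\mu^2 .
\end{equation*}

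\textbf{Step 2 (telescoping and consensus).} Sum from $k=0$ to $T-1$ and telescope $V_k$, using $F\ge F^\star$. Then substitute the cumulative bound of Lemma \ref{lem:consensus_recursion} for $\sum_k\mathbb{E}[\Xi_{k+1}]\le\sum_{k=0}^{T}\mathbb{E}[\Xi_k]$. This produces a term $\frac{C_\Xi C_1 L^2\eta^3}{(1-\rho)^4}\sum_k\mathbb{E}\|\nabla F(\bar{x}_k)\|^2$, where $C_\Xi$ collects $4(1+d(1-\beta)/N)\le 5$.

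The third step-size condition $\eta\le\frac{(1-\rho)^2}{L\sqrt{8C_\Xi C_1}}$ makes this at most $\frac{\eta}{8}\sum_k\mathbb{E}\|\nabla F\|^2$. Absorbing it leaves a net coefficient $\frac{\eta}{8}$, which explains the constant $8$ in \eqref{eq:theorem_result}.

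The consensus lemma also contributes two other kinds of terms. Its initial terms (the $\mathbb{E}\|\mathbf{G}_0\|_F^2$ and $\Xi_0$ pieces, scaled by $\eta L^2$ and by powers of $(1-\rho)$) go into $C_{init}\eta\mathbb{E}\|\mathbf{G}_0\|_F^2$, with $\Xi_0=0$ assuming common initialization or folded in otherwise. Its $T\eta^2\sigma_{ZO}^2$ term becomes a per-iteration floor of order $\eta^2 L^2(1-\beta)^2\sigma_{ZO}^2/(1-\rho)^4$, which is dominated under the step-size conditions.

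\textbf{Step 3 (floor).} Divide by $\eta T/8$. The floor terms become:
\begin{itemize}
\item $32\delta_\mu^2=8\mu^2L^2d^3$;
\item $\frac{8(1-\beta)}{N}\big(4d\zeta^2+\frac{\mu^2L^2d^3}{2}\big)$;
\item the consensus-induced term.
\end{itemize}
The $\mu$-parts all fit into $\mathcal{O}(\mu^2L^2d^3)$. The heterogeneity parts come out as $\mathcal{O}(\frac{(1-\beta)d}{N}\zeta^2)$ from the noise and $\mathcal{O}((1-\beta)^2 d\zeta^2)$ from consensus.

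\textbf{Main obstacle.} Obtaining the stated $(1-\beta)^2$ rather than $(1-\beta)$ dependence from the direct noise term is the delicate point. The division by $1-\beta$ in the Lyapunov weight costs one power. My first attempt would be to use weight $\eta$ instead of $\frac{\eta}{1-\beta}$ when possible, exploiting the spare contraction $(1-\beta)\mathbb{E}\|E_k\|^2$. If that fails, I would accept the loss by noting that the third step-size condition forces $\eta$ small, and use the momentum-averaging sum in the $\eta^2$-scaled consensus term, where $(1-\beta)^2$ appears naturally via $C_2$.

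A second, more routine, care point is the index mismatch ($\Xi_{k+1}$ and $\nabla F(\bar{x}_{k+1})$ versus sums starting at $0$). It is handled by extending the sums to $T$ and absorbing the $O(1)$ boundary terms into $\tilde{V}_0$.
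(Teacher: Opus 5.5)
Your plan follows the paper's own proof: the same Lyapunov function with weight $\lambda=\frac{\eta}{1-\beta}$, the same three absorptions, and the same substitution of Lemma~\ref{lem:consensus_recursion} leaving a net coefficient $\frac{\eta}{8}$. The only difference is the bookkeeping, and yours is correct. The product $\lambda\,\mathcal{T}_{drift}$ contains $4\eta L^2(1+\frac{d(1-\beta)}{N})\mathbb{E}[\Xi_{k+1}]+\frac{\eta(1-\beta)}{N}\sigma_{ZO}^2+4\eta\delta_\mu^2$. The paper's simplified drift bound instead writes $C_\Xi\eta L^2\mathbb{E}[\Xi_{k+1}]+\eta C_{ZO}$, with $C_\Xi=4(1-\beta)(1+\frac{d(1-\beta)}{N})$ and $C_{ZO}$ the constant part of $\mathcal{T}_{drift}$. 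In other words, it multiplies these pieces by $\eta$ rather than by $\lambda$. That spurious factor $1-\beta$ is the only source of the $(1-\beta)^2/N$ in the statement, so the genuine gap is exactly your ``main obstacle''.

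Neither of your fallbacks closes it. With Lemma~\ref{lem:descent_lemma} and Lemma~\ref{lem:momentum_error} as stated, the coefficient of $\mathbb{E}\|E_k\|^2$ in the drift is $\frac{\eta}{2}-\lambda(1-\beta)$. Any workable weight therefore needs $\lambda\ge\frac{\eta}{2(1-\beta)}$; weight $\eta$ works only for $\beta\le\frac12$. The noise then leaves at least $\frac{\eta(1-\beta)}{2N}\sigma_{ZO}^2$ per step, which is a floor of order $\frac{1-\beta}{N}\sigma_{ZO}^2$ after dividing by $\eta T$. This floor is independent of $\eta$, so shrinking $\eta$ cannot remove it. The $(1-\beta)^2$ carried by $C_2$ sits in a separate term and does not touch it.

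Nor is this just slack in the analysis. Take $f_i(x)=\frac{L}{2}\|x-c_i\|^2$. The estimator is then unbiased and $\frac{1}{N}\sum_i\nabla f_i(x_{k,i})=\nabla F(\bar x_k)$ exactly. Hence the averages follow a heavy-ball recursion driven by martingale noise of second moment about $\frac{(d-1)\zeta^2}{N}$. Its stationary value of $\mathbb{E}\|\nabla F(\bar x_k)\|^2$ is about $\frac{L\eta}{2}\cdot\frac{(d-1)\zeta^2}{N}$. At the admissible step $\eta=\frac{1-\beta}{\sqrt{8}\beta L}$, which is the binding constraint as $\beta\to1$ with the constants the lemmas produce, this is of order $\frac{(1-\beta)d\zeta^2}{N}$. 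So, up to the consensus term below, what your argument honestly delivers is a $\frac{(1-\beta)d}{N}\zeta^2$ floor. A $(1-\beta)^2$ floor would require $L\eta\lesssim(1-\beta)^2$.

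Three secondary steps also need repair.

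\textbf{Consensus-induced floor.} This term is not ``dominated under the step-size conditions''. The third condition only gives $\frac{8C_\Xi C_2\eta^2L^2}{(1-\rho)^4}\sigma_{ZO}^2\le\frac{C_2}{C_1}\sigma_{ZO}^2=\frac{\sigma_{ZO}^2}{4d}=\zeta^2+\frac{\mu^2L^2d^2}{8}$, which carries neither a $(1-\beta)$ nor a $1/N$ factor. In general the term is $\mathcal{O}\big(\frac{\eta^2L^2(1-\beta)^2}{(1-\rho)^4}d\zeta^2\big)$. It fits a $\frac{(1-\beta)^2d}{N}\zeta^2$ floor only under an extra condition such as $\eta\le\frac{(1-\rho)^2}{L\sqrt{N}}$.

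\textbf{Boundary terms from the index shift.} These are $\frac{\eta}{4}\mathbb{E}\|\nabla F(\bar x_T)\|^2$ plus the one produced by applying Lemma~\ref{lem:consensus_recursion} up to index $T$. They involve the last iterate, so they cannot be put into $\tilde V_0$. Instead, use $\|\nabla F(x)\|^2\le 2L(F(x)-F^\star)$ together with $\eta\le\frac{1}{2L}$ to absorb them into the $-\mathbb{E}F(\bar x_T)$ left by telescoping.

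\textbf{Initial consensus error.} $\mathbb{E}[\Xi_0]$ must either be kept explicitly or removed by identical initialization, because $\tilde V_0$ as defined contains no such term.
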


\begin{IEEEproof}
To establish the global convergence rate, we construct a composite Lyapunov sequence $V_k \triangleq \mathbb{E}[F(\bar{x}_k)] + \lambda \mathbb{E}[\|E_k\|^2]$, which tightly couples the objective descent with the expected momentum tracking error $\mathbb{E}[\|E_k\|^2]$, where $E_k \triangleq \bar{m}_k - \nabla F(\bar{x}_k)$. By defining the coupling parameter specifically as $\lambda \triangleq \frac{\eta}{1-\beta}$, we evaluate the single-step Lyapunov drift $V_{k+1} - V_k$.

Substituting the objective descent bound from Lemma \ref{lem:descent_lemma} and multiplying the recursive momentum bound from Lemma \ref{lem:momentum_error} by $\lambda$, we obtain the explicitly expanded drift:
\begin{align} \label{eq:lyap_expansion}
    V_{k+1} - V_k &\le -\frac{\eta}{2} \mathbb{E}\|\nabla F(\bar{x}_k)\|^2 + \frac{\eta}{2} \mathbb{E}[\|E_k\|^2] - \frac{\eta}{4} \mathbb{E}\|\bar{m}_k\|^2 \nonumber \\
    &\quad + \lambda (\beta - 1) \mathbb{E}[\|E_k\|^2] \nonumber \\ 
    &+ \lambda \frac{4d(1-\beta)^2}{N} \mathbb{E}\|\nabla F(\bar{x}_{k+1})\|^2 \nonumber \\
    &\quad + \lambda \frac{2\beta^2 L^2 \eta^2}{1-\beta} \mathbb{E}\|\bar{m}_k\|^2 + \lambda \cdot \mathcal{T}_{error},
\end{align}
where $\mathcal{T}_{error}$ encapsulates the remaining network consensus error and the zeroth-order variance floor derived in Lemma \ref{lem:momentum_error}. 

Crucially, the specific choice of $\lambda$ yields $\lambda(\beta - 1) \mathbb{E}[\|E_k\|^2] = -\eta \mathbb{E}[\|E_k\|^2]$. When combined with the $\frac{\eta}{2} \mathbb{E}[\|E_k\|^2]$ term from the descent lemma, this generates a strictly negative tracking drift $-\frac{\eta}{2} \mathbb{E}[\|E_k\|^2] \le 0$, completely eliminating the momentum error $\mathbb{E}[\|E_k\|^2]$ from the upper bound. Collecting the identical terms (and aligning the gradient indices via summation over $T$ iterations), the single-step dynamic simplifies to:
\begin{align} \label{eq:lyap_drift_exact}
    V_{k+1} - V_k &\le - \frac{\eta}{2} \left( 1 - \frac{8d(1-\beta)}{N} \right) \mathbb{E}\|\nabla F(\bar{x}_k)\|^2 \nonumber \\
    &\quad - \eta \left( \frac{1}{4} - \frac{2\beta^2 L^2 \eta^2}{(1-\beta)^2} \right) \mathbb{E}\|\bar{m}_k\|^2 \nonumber \\
    &\quad + C_\Xi \eta L^2 \mathbb{E}[\Xi_{k+1}] + \eta C_{ZO},
\end{align}
where we define the network coefficient $C_\Xi \triangleq 4(1-\beta)(1+\frac{d(1-\beta)}{N})$ and the fundamental variance bound $C_{ZO} \triangleq \frac{(1-\beta)^2}{N}(4d\zeta^2 + \frac{\mu^2 L^2 d^3}{2}) + 4(1-\beta)\delta_\mu^2$.

To guarantee strict objective descent, we systematically control the coefficients in \eqref{eq:lyap_drift_exact}. The momentum condition $1-\beta \le \frac{N}{16d}$ mathematically ensures $\frac{8d(1-\beta)}{N} \le \frac{1}{2}$, thereby bounding the leading coefficient of the gradient norm by $-\frac{\eta}{4}$. Concurrently, the step size condition $\eta \le \frac{1-\beta}{\sqrt{8}\beta L}$ directly guarantees $\frac{1}{4} - \frac{2\beta^2 L^2 \eta^2}{(1-\beta)^2} \ge 0$, which structurally removes the auxiliary variable $\mathbb{E}\|\bar{m}_k\|^2$ from the bound.

Summing the simplified drift from $k=0$ to $T-1$ produces the following telescoping sum:
\begin{align} \label{eq:summed_drift_initial}
    \frac{\eta}{4} \sum_{k=0}^{T-1} \mathbb{E}\|\nabla F(\bar{x}_k)\|^2 &\le V_0 - \mathbb{E}[F(\bar{x}_T)] + \eta T C_{ZO} \nonumber \\
    &\quad + C_\Xi \eta L^2 \sum_{k=0}^{T-1} \mathbb{E}[\Xi_k].
\end{align}
Because the cumulative consensus error contains nested historical gradients, we explicitly substitute the bound derived in Lemma \ref{lem:consensus_recursion} (Eq. \eqref{eq:finite_sum_xi}) into \eqref{eq:summed_drift_initial}. Moving all the gradient sequence terms to the left-hand side groups them as follows:
\begin{align} \label{eq:grouped_gradient}
    &\left( \frac{\eta}{4} - \frac{C_\Xi C_1 \eta^3 L^2}{(1-\rho)^4} \right) \sum_{k=0}^{T-1} \mathbb{E}\|\nabla F(\bar{x}_k)\|^2 \nonumber \\
    &\le V_0 - \mathbb{E}[F(\bar{x}_T)] + \eta T \left( C_{ZO} + \frac{C_\Xi C_2 \eta^2 L^2}{(1-\rho)^4} \sigma_{ZO}^2 \right) \nonumber \\
    &\quad + \frac{C_\Xi C_3 \eta^3 L^2}{(1-\rho)^4} \frac{1}{N}\mathbb{E}\|\mathbf{G}_0\|_F^2 + \frac{C_\Xi C_0 \eta L^2}{(1-\rho)^3} \mathbb{E}[\Xi_0].
\end{align}

To establish a strict global contraction, the network-induced gradient penalty must be universally dominated by the primary descent term. We mathematically enforce this by requiring the coefficient to be at least $\frac{\eta}{8}$:
\begin{equation} \label{eq:contraction_enforcement}
    \frac{\eta}{4} - \frac{C_\Xi C_1 \eta^3 L^2}{(1-\rho)^4} \ge \frac{\eta}{8} \implies \frac{C_\Xi C_1 \eta^2 L^2}{(1-\rho)^4} \le \frac{1}{8}.
\end{equation}
Solving this precise inequality yields the final structural constraint $\eta \le \frac{(1-\rho)^2}{L \sqrt{8 C_\Xi C_1}}$, exactly matching the step size condition \eqref{eq:step_size_condition}. 

Under this rigorous step size regime, the leading coefficient simplifies to $\frac{\eta}{8}$. We formally absorb the static initialization terms ($\mathbb{E}[\Xi_0]$ and $\mathbf{G}_0$) into the augmented initial energy $\tilde{V}_0$. Dividing both sides of the inequality by $\frac{\eta T}{8}$ and invoking the objective lower bound $\mathbb{E}[F(\bar{x}_T)] \ge F^\star$ isolates the gradient norm. Finally, extracting the dominant steady-state components with respect to $d$, $\zeta^2$, and $\mu$ using the $\mathcal{O}(\cdot)$ notation yields the ultimate convergence result \eqref{eq:theorem_result}. This completes the proof.
\end{IEEEproof}

\begin{remark}
Equation \eqref{eq:theorem_result} reveals that under a constant step size $\eta$, ZO-MGT converges to a stationary neighborhood of size $\mathcal{O}(\mu^2 L^2 d^3 + \frac{(1-\beta)^2}{N} d \zeta^2)$. This explicit formulation highlights two core advantages of our architecture: (1) the network scale $N$ acts as an inherent variance reducer for data heterogeneity, demonstrating the collaborative benefit of distributed sampling; and (2) the steady-state error induced by extreme non-IID data $\zeta^2$ scales with the square of the residual momentum coefficient, i.e., $\mathcal{O}((1-\beta)^2)$. By selecting a large momentum factor $\beta \to 1$, the detrimental impact of data heterogeneity is aggressively suppressed, enabling much tighter consensus and significantly lower error floors than traditional zero-order baselines.
\end{remark}

\section{Numerical Experiments}
\label{sec:experiments}

To empirically validate the theoretical convergence properties and the variance reduction capability of the proposed ZO-MGT algorithm, we evaluate its performance on a non-convex classification task.

\subsection{Experimental Setup and Baselines}

We simulate a decentralized binary classification scenario using the widely adopted \texttt{a9a} dataset from the LIBSVM library. The feature dimension is $d = 124$ (including the bias term), and the total number of samples is $N_{total} = 32561$. The network consists of $N=20$ agents. 

To strictly satisfy the non-convexity and smoothness conditions delineated in Assumption \ref{ass:smooth}, the local cost function $f_i(x)$ for each agent $i$ is formulated as a non-linear least squares problem. This formulation combines the mean squared error (MSE) with a sigmoid activation function, supplemented by an $\ell_2$-regularization term. Specifically, the local objective is given by:
\begin{equation}
    f_i(x) = \frac{1}{|D_i|} \sum_{j \in D_i} \left( \frac{1}{1 + \exp(-a_j^\top x)} - y_j \right)^2 + \frac{\lambda}{2} \|x\|^2,
\end{equation}
where $D_i$ denotes the local dataset exclusively assigned to agent $i$, $a_j \in \mathbb{R}^d$ is the feature vector, $y_j \in \{0, 1\}$ is the corresponding class label, and $\lambda = 0.001$ is the regularization parameter. 

To rigorously evaluate the algorithmic resilience under severe data heterogeneity (Assumption \ref{ass:heterogeneity}), we adopt a pathological data partitioning strategy. Specifically, all data samples are globally sorted by their labels prior to being partitioned into $N$ equal-sized, disjoint subsets. Consequently, the local dataset $D_i$ for the vast majority of agents contains samples from strictly a single class. This extreme data heterogeneity induces a significantly large local gradient variance bound $\zeta^2$.

The communication topology is modeled as an Erdős-Rényi random graph $\mathcal{G}=(\mathcal{V}, \mathcal{E})$ with a connectivity probability of $p=0.3$, simulating an unstructured peer-to-peer network. To guarantee the doubly stochastic property and strictly positive spectral gap required by Assumption \ref{ass:graph}, the mixing matrix $W = [w_{ij}]$ is constructed using the Metropolis-Hastings rule:
\begin{equation}
    w_{ij} = 
    \begin{cases}
        \frac{1}{\max\{d_i, d_j\} + 1}, & \text{if } \{i,j\} \in \mathcal{E}, \\
        1 - \sum_{k \in \mathcal{N}_i} w_{ik}, & \text{if } i = j, \\
        0, & \text{otherwise,}
    \end{cases}
\end{equation}
where $d_i$ denotes the degree of agent $i$, and $\mathcal{N}_i$ is its neighborhood set. The resulting spectral gap parameter for the generated graph is explicitly calculated as $\rho = \|W - \frac{1}{N}\mathbf{11}^\top\|_2 \approx 0.7592$.

We comprehensively compare our ZO-MGT algorithm against two well-established distributed zeroth-order baselines:
\begin{itemize}
    \item Tang Alg 1  \cite{tangDistributedZeroorderAlgorithms2021} : A standard distributed gradient descent method employing a 2-point central-difference estimator over uniformly sampled spherical directions.
    \item Tang Alg 2  \cite{tangDistributedZeroorderAlgorithms2021} : A gradient tracking algorithm utilizing a deterministic, full $2d$-point coordinate-descent estimator.
\end{itemize}

It is worth noting that while several recent works have explored variance reduction in zeroth-order settings, e.g., incorporating communication compression or centralized server aggregation, they operate under fundamentally different network assumptions, such as star topologies or unconstrained query budgets. Thus, a direct empirical comparison is omitted to ensure a fair evaluation of the core tracking and momentum mechanisms in a fully decentralized, strictly two-point query regime.

For an equitable comparison, all algorithms operate under a constant step size $\eta = 0.05$ and a perturbation smoothing radius $\mu = 0.01$. The momentum factor for ZO-MGT is set to $\beta = 0.9$. To quantitatively assess the performance, we track two primary metrics: the squared global gradient norm $\|\nabla F(\bar{x}_k)\|^2$ (measuring stationarity) and the network consensus error $\frac{1}{N}\sum_{i=1}^N \|x_{k,i} - \bar{x}_k\|^2$ (measuring network agreement). Furthermore, to simulate realistic black-box deployments where objective state monitoring is required, the execution time strictly incorporates all necessary function evaluations.

\subsection{Results and Discussion}
Due to the prohibitive computational complexity of Tang Alg 2, we terminate its execution early at 40 iterations, whereas ZO-MGT and Tang Alg 1 are evaluated over the full 1000 iterations.

\begin{figure}[h]
    \centering
    \subfloat[Global Gradient Norm vs Iterations]{\includegraphics[width=0.9\linewidth]{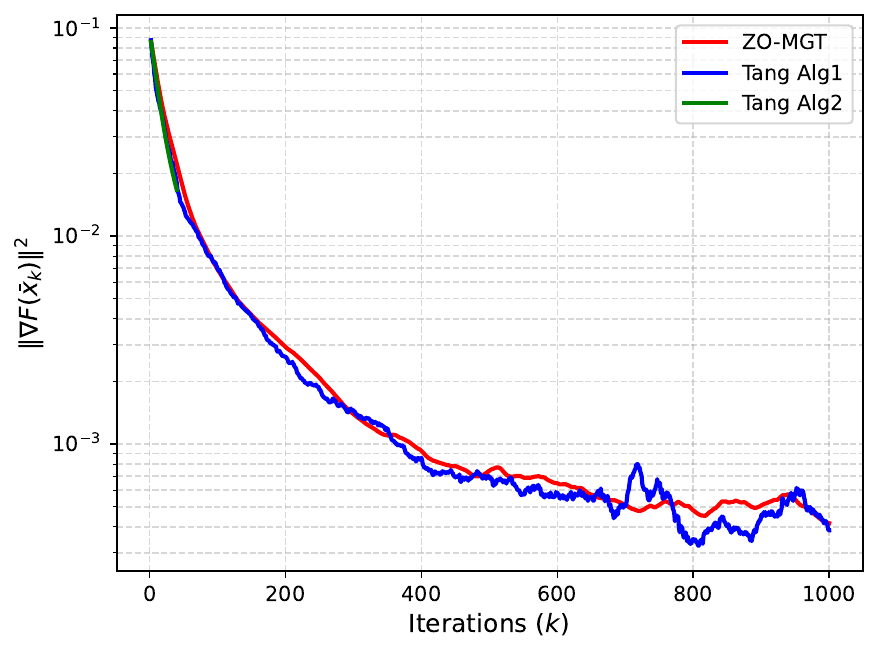}\label{fig:grad_iter}}
    \vspace{0.5em} 
    \\
    \subfloat[Consensus Error vs Iterations]{\includegraphics[width=0.9\linewidth]{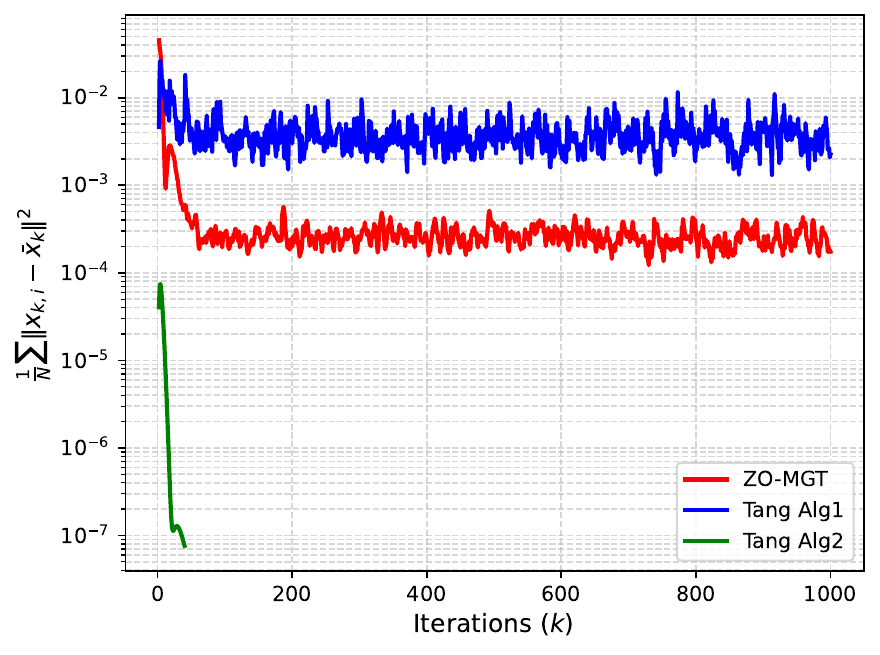}\label{fig:cons_iter}}
    \caption{Convergence evaluation with respect to iterations. }
    \label{fig:exp_iters}
\end{figure}

\subsubsection{Variance Reduction and Consensus Stability }
As illustrated in Fig. \ref{fig:exp_iters}(a), ZO-MGT and Tang Alg 1 exhibit comparable initial descent trajectories in the global gradient norm, confirming that the Rademacher-based gradient estimators employed in our framework maintain approximation accuracy similar to traditional spherical perturbations. However, their asymptotic consensus behaviors diverge significantly, as shown in Fig. \ref{fig:exp_iters}(b). Tang Alg 1 exhibits a higher steady-state error floor ($\approx 10^{-2}$), as it lacks a dedicated mechanism to suppress the compounded variance induced by the coupling of severe data heterogeneity and zeroth-order sampling noise. 

In contrast, ZO-MGT effectively mitigates this variance through its momentum-based update. By filtering the sampling fluctuations, it achieves a consensus error ($\approx 10^{-4}$) that is approximately two orders of magnitude lower than Tang Alg 1, thereby validating our theoretical results in Theorem \ref{thm:convergence}. While Tang Alg 2 maintains the lowest error floor due to its deterministic full-coordinate gradient evaluation, ZO-MGT provides a much more favorable trade-off between tracking precision and per-iteration query complexity.

\begin{figure}[h]
    \centering
    \subfloat[Global Gradient Norm vs Time]{\includegraphics[width=0.9\linewidth]{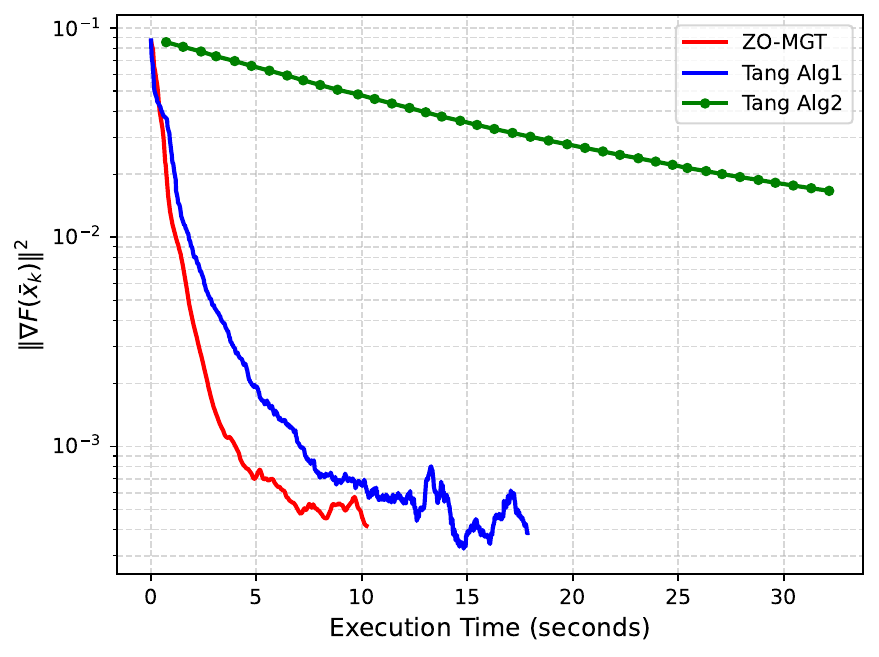}\label{fig:grad_time}}
    \vspace{0.5em} 
    \\
    \subfloat[Consensus Error vs Time]{\includegraphics[width=0.9\linewidth]{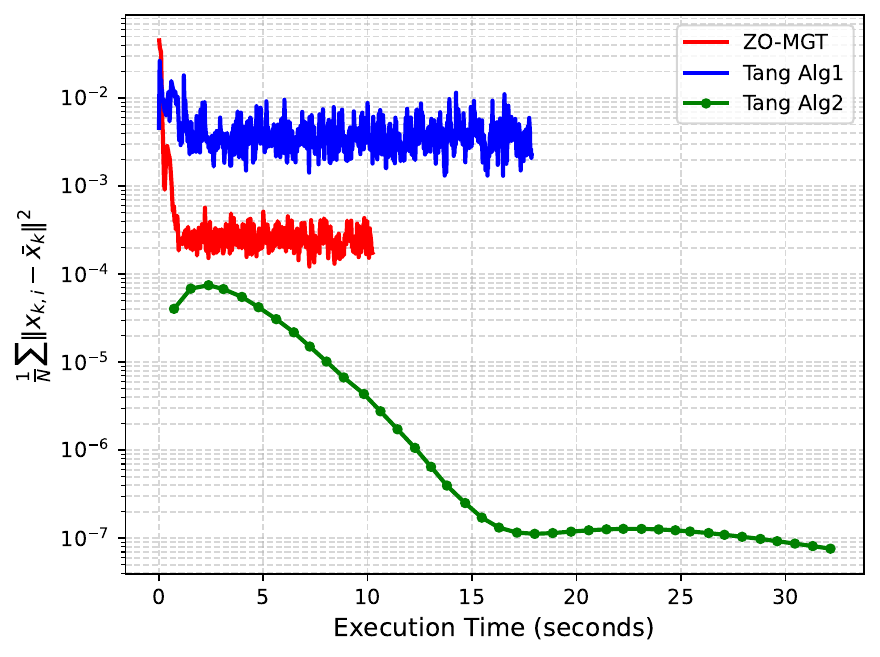}\label{fig:cons_time}}
    \caption{Practical wall-clock time efficiency. }
    \label{fig:exp_time}
\end{figure}

\subsubsection{Computational Efficiency }

Fig. \ref{fig:exp_time} translates iteration complexity into actual wall-clock time, revealing the profound practical superiority of ZO-MGT. We first observe the performance of Tang Alg 2. Although it exhibits rapid per-iteration convergence and tight consensus in terms of iterations, its massive query overhead, requiring a full coordinate-descent estimator with $2d = 248$ evaluations per step, creates a severe temporal bottleneck. Consequently, the $\mathcal{O}(d)$ per-iteration query dependence imposes a substantial computational burden, significantly degrading its convergence efficiency with respect to wall-clock time.

With Tang Alg 2 practically bottlenecked by query costs, the temporal comparison between the two lightweight random-direction methods becomes crucial. Crucially, ZO-MGT demonstrates a distinct efficiency advantage over Tang Alg 1 (Fig. \ref{fig:exp_time}(a)), which is attributed to two primary factors that align with realistic black-box deployment constraints:

First, from the perspective of query efficiency, ZO-MGT utilizes a forward-difference estimator, which inherently evaluates the exact current objective state $f_i(x_{k,i})$ alongside a single perturbed state. This allows the algorithm to seamlessly reuse the current state evaluation for global loss monitoring and convergence tracking without additional cost, maintaining a strict budget of two queries per iteration. In contrast, Tang Alg 1 employs a central-difference estimator that evaluates two perturbed states ($f_i(x \pm \mu u)$). Under a rigorous black-box model where the true objective value is not a byproduct of the gradient estimation, an auxiliary third function evaluation is mandatory at each step solely for state monitoring and performance logging. This 50\% increase in query overhead per iteration significantly inflates the actual execution time of Tang Alg 1.

Second, from a sampling-overhead perspective, the Rademacher perturbations $u \in \{-1, 1\}^d$ driving ZO-MGT are computationally more lightweight to generate than the spherical perturbations in Tang Alg 1. While the latter requires drawing continuous standard Gaussian vectors followed by costly non-linear floating-point operations for $d$-dimensional Euclidean normalization ($u = g / \|g\|$), Rademacher sampling merely relies on generating discrete random signs. When compounded over thousands of iterations and multiple agents, these algorithmic advantages ensure that ZO-MGT achieves superior consensus and stationarity vastly faster in software simulations. Furthermore, the discrete nature of Rademacher perturbations uniquely positions ZO-MGT for extreme acceleration via bitwise operations on specialized hardware, e.g., FPGAs, which remains a highly promising direction for future deployment.

\subsubsection{Verification of the Theoretical Suppression Rate}

To investigate the impact of momentum-based variance reduction and to numerically validate the theoretical bound derived in Theorem \ref{thm:convergence}, we conduct a sensitivity analysis on the momentum factor $\beta$. We evaluate the consensus performance of ZO-MGT across a spectrum of values $\beta \in \{0, 0.5, 0.8, 0.9, 0.98\}$, where $\beta=0$ effectively reduces the algorithm to a vanilla zeroth-order gradient tracking scheme without variance reduction.

\begin{figure}[h]
    \centering
    \subfloat[Consensus Error vs Iterations]{\includegraphics[width=0.9\linewidth]{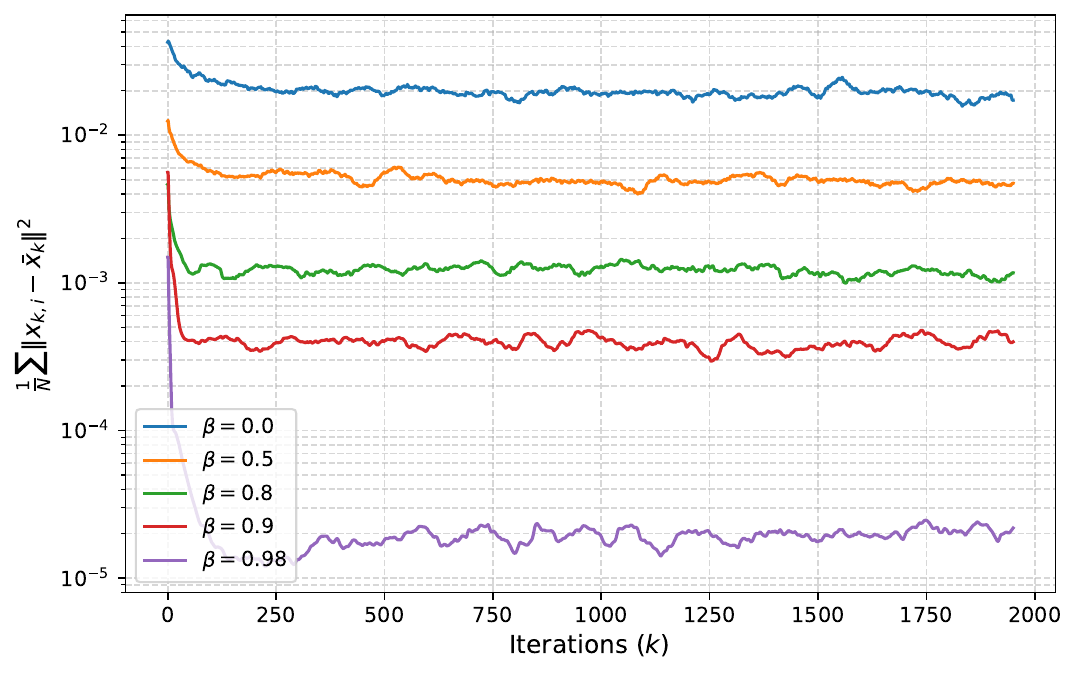}\label{fig:beta_conv}}
    \hfill
    \subfloat[Log-Log Scaling Analysis]{\includegraphics[width=0.9\linewidth]{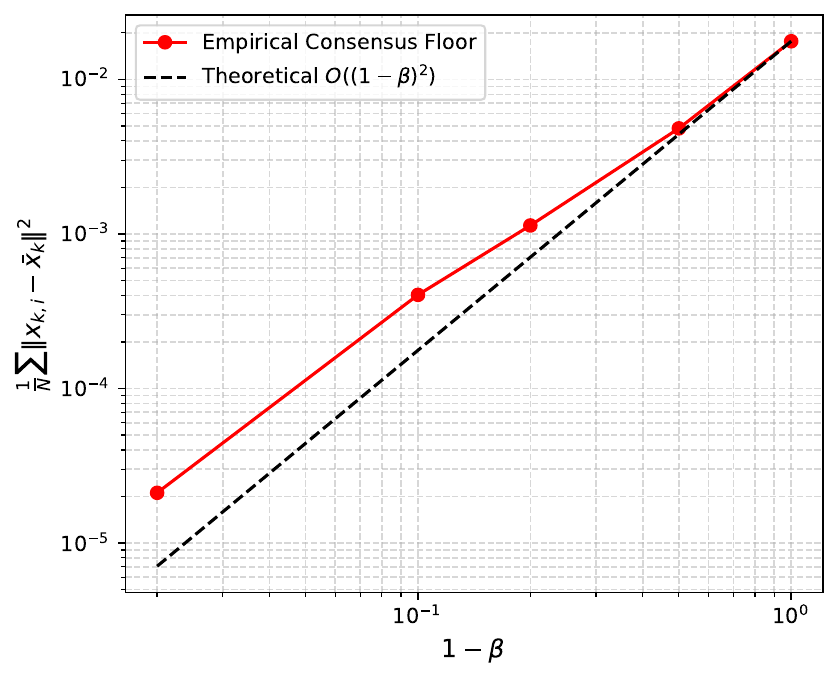}\label{fig:beta_loglog}}
    \caption{Sensitivity analysis of the momentum factor $\beta$. }
    \label{fig:beta_analysis}
\end{figure}

Fig. \ref{fig:beta_analysis}(a) illustrates the evolution of the consensus error floor as a function of $\beta$. 
It is observed that increasing $\beta$ from $0$ to $0.98$ monotonically depresses the steady-state error floor. 
While a larger $\beta$ marginally decelerates the initial transient convergence due to increased tracking lag, it systematically improves the asymptotic precision of the algorithm.
Specifically, the error floor for $\beta=0.98$ is nearly three orders of magnitude lower than that of the vanilla GT ($\beta=0$), highlighting the critical role of momentum as an implicit variance reducer.

Fig. \ref{fig:beta_analysis}(b) presents a log-log scaling analysis between the steady-state consensus error and the term $(1-\beta)$. The empirical data points (red markers) align remarkably well with the theoretical $O((1-\beta)^2)$ reference line (black dashed line). The scaling trend of the empirical results follows the predicted $O((1-\beta)^2)$ scaling, confirming that the error floor decreases quadratically as $\beta$ approaches 1.This result perfectly corroborates our Lyapunov analysis and justifies the integration of momentum-based variance reduction into decentralized zeroth-order frameworks.

\section{Conclusion}
\label{sec:conclusion}

In this paper, we proposed ZO-MGT, a query-efficient distributed zeroth-order algorithm for non-convex multi-agent optimization under severe data heterogeneity. By integrating two-point randomized gradient estimation, momentum variance reduction, and gradient tracking, ZO-MGT effectively mitigates the severe variance amplification typical in derivative-free tracking methods. We theoretically proved an $\mathcal{O}(1/T)$ convergence rate and mathematically demonstrated that a large momentum factor suppresses the heterogeneity-induced error floor at a quadratic rate of $\mathcal{O}((1-\beta)^2)$. ZO-MGT requires exactly two function queries per iteration through Rademacher perturbations. This fixed query budget avoids the $\mathcal{O}(d)$ complexity of existing methods, resulting in faster wall-clock convergence and tighter network consensus. Future work will explore distributed adaptive momentum methods and deploy ZO-MGT on specialized hardware accelerators to actualize the bitwise execution advantages of discrete sampling.

\nocite{*}
\bibliographystyle{IEEEtran}
\bibliography{ZO-MGT}

\begin{IEEEbiography}
	[{\includegraphics[width=1in,height=1.25in,clip,keepaspectratio]{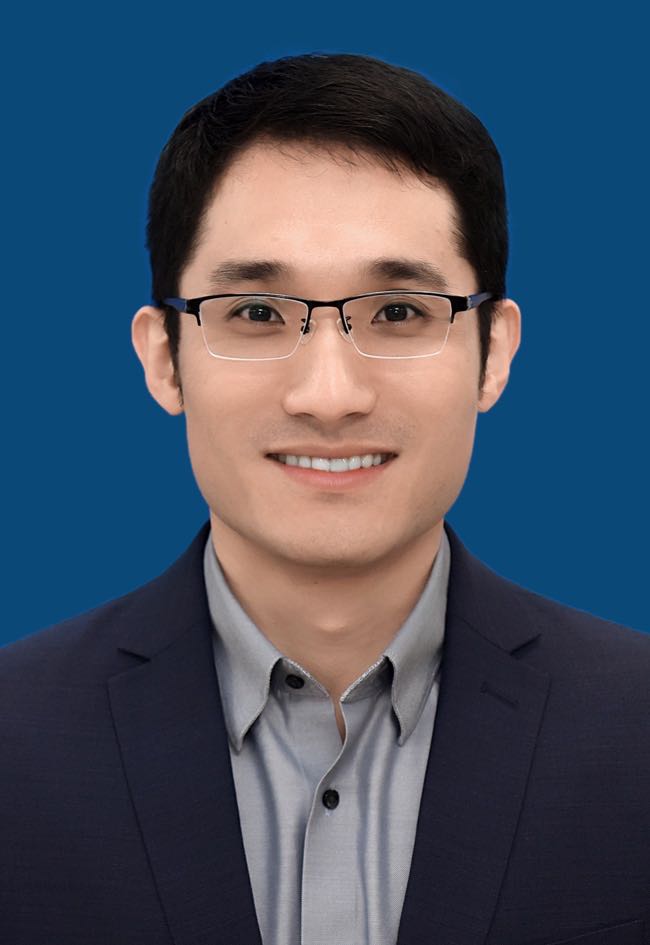}}]{Yanxu Su} (Member, IEEE)
	received his B.E. and M.E. degrees from the College of Automation Engineering, Nanjing University of Aeronautics and Astronautics, Nanjing, China, in Control Engineering in 2012 and 2015, respectively, and the Ph.D. degree from Southeast University in Control Science and Engineering in 2021. He was a visiting Ph.D. student with the Department of Mechanical Engineering, University of Victoria, Victoria, British Columbia, Canada, from 2018 to 2019. He is currently an Associate Professor with the School of Artificial Intelligence, Anhui University, Hefei, China. 
	
	His research interests include distributed optimization, multi-agent systems, model predictive control, etc.
\end{IEEEbiography}
\vspace{-10mm}
\begin{IEEEbiography}[{\includegraphics[width=1in,height=1.25in,clip,keepaspectratio]{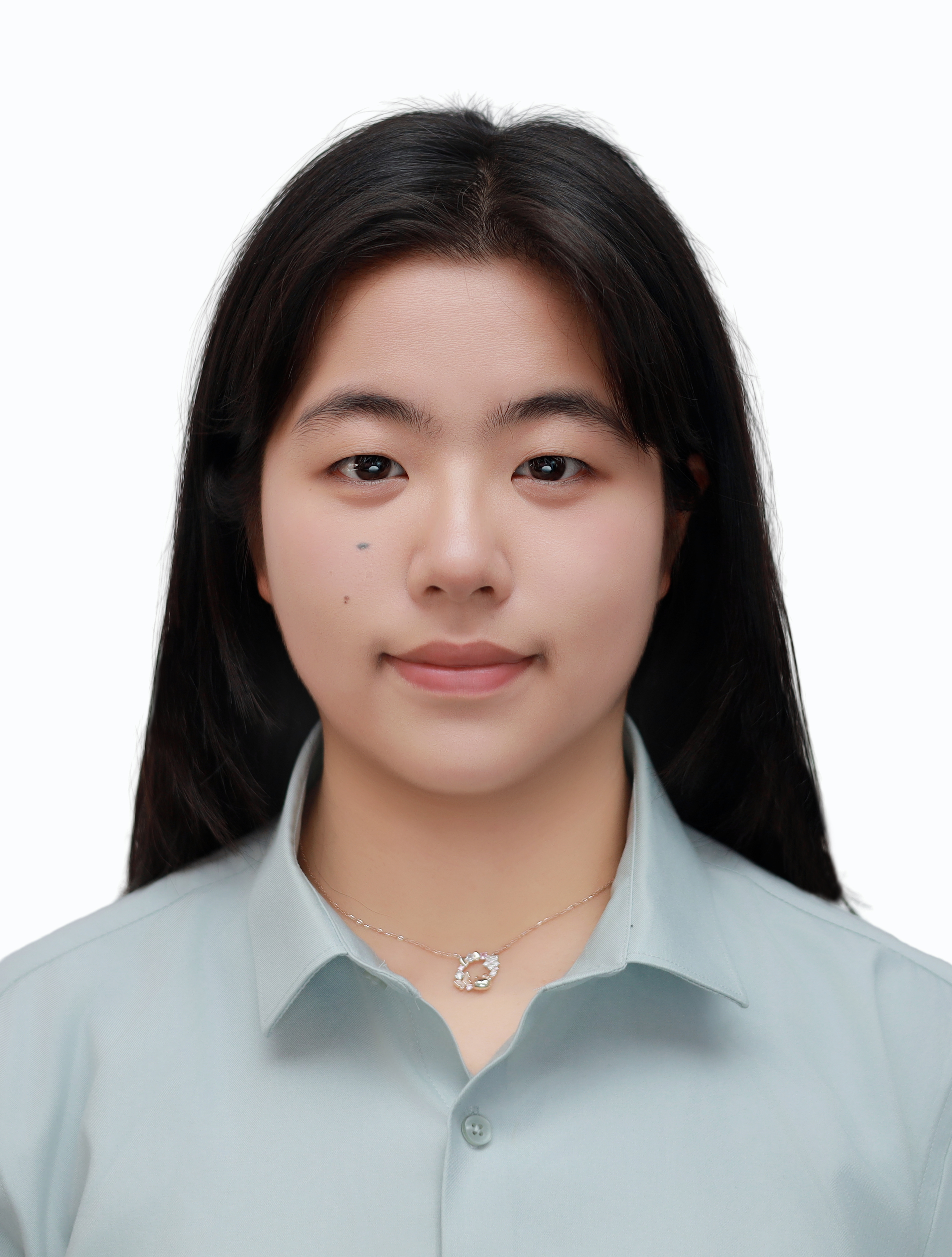}}]{Xiaorui Tong}
	received her B.E. degree from the College of Science, China Three Gorges University, Yichang, China, in Information and Computing Science in 2020 and 2024. She is currently pursuing the M.E. degree in Intelligence Science and Technology with Anhui University, Hefei, China.
	
	Her main research directions are distributed optimization and zeroth-gradient optimization.
\end{IEEEbiography}
\vspace{-10mm}
\begin{IEEEbiography}[{\includegraphics[width=1in,height=1.25in,clip,keepaspectratio]{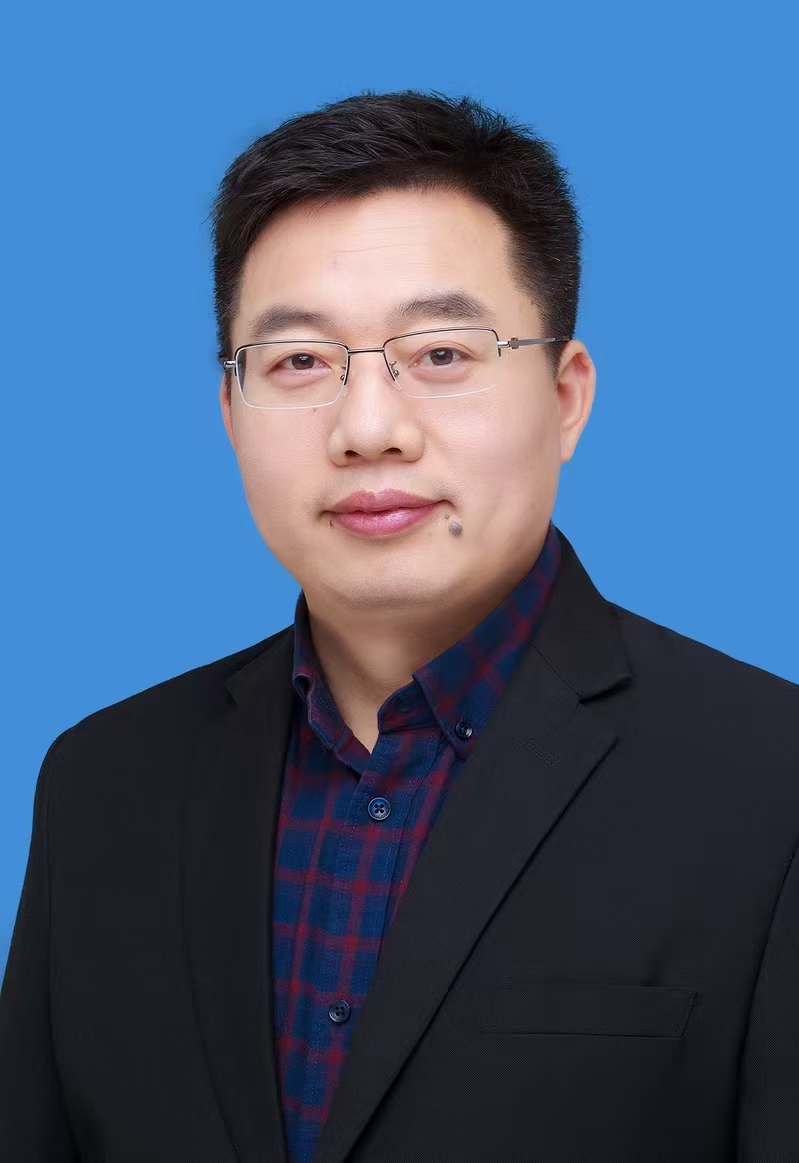}}]{Changyin Sun}(Senior Member, IEEE)
	received the bachelor’s degree from the College of Mathematics, Sichuan University, Chengdu, China, in 1996, and the M.S. and Ph.D. degrees in electrical engineering from Southeast University, Nanjing, China, respectively, in 2001 and 2004. He is currently a Professor with the School of Artificial Intelligence, Anhui University, Hefei, China.
	
	He is the Associate Editor of \textit{IEEE Transactions on Neural Networks and Learning Systems}, \textit{IEEE Neural Processing Letters}, and \textit{IEEE/CAA Journal of Automatica Sinica}. His research interests include intelligent control, flight control, pattern recognition, optimal theory, etc.
\end{IEEEbiography}
\end{document}